\definecolor{conditional}{rgb}{0,1,0}
\definecolor{e-mail}{rgb}{0,.40,.80}
\definecolor{reference}{rgb}{.20,.60,.22}
\definecolor{mrnumber}{rgb}{.80,.40,0}
\definecolor{citation}{rgb}{.20,.60,.22}
\newtheorem{theorem}{Theorem}[section]
\newtheorem{lemma}[theorem]{Lemma}
\newtheorem{proposition}[theorem]{Proposition}
\newtheorem{corollary}[theorem]{Corollary}
\theoremstyle{definition}
\newtheorem{notation}[theorem]{Notation}
\newtheorem{definition}[theorem]{Definition}
\newtheorem{remark}[theorem]{Remark}
\newtheorem{example}[theorem]{Example}
\DeclareMathOperator{\rank}{rank}
\DeclareMathOperator{\init}{in}
\DeclareMathOperator{\Wr}{Wr}
\DeclareMathOperator{\lead}{\text{lead}}
\newcommand{\whu}{\hat{u}}
\newcommand{\ZZ}{\mathbb{Z}}
\newcommand{\cinfty}{\CC^{\infty}(0)}
\newcommand{\Q}{\mathbb{Q}}
\newcommand{\CC}{\mathbb{C}}
\newcommand{\E}{\mathcal{E}}
\newcommand{\F}{\mathcal{F}}
\newcommand{\Frac}{\text{Frac}}
\setlist[enumerate]{leftmargin=.5in}
\setlist[itemize]{leftmargin=.5in}
\title{Parameter identifiability and input-output equations}
\author{Alexey Ovchinnikov\thanks{Department of Mathematics, CUNY Queens College and Ph.D. Programs in Mathematics and Computer Science, CUNY Graduate Center, New York, USA; e-mail: \href{mailto:aovchinnikov@qc.cuny.edu}{aovchinnikov@qc.cuny.edu},}\and Gleb Pogudin\footnote{LIX, CNRS, \'Ecole Polytechnique, Institute Polytechnique de Paris, France and Department of Computer Science, National Research University Higher School of Economics, Moscow, Russia, and Courant Institute of Mathematical Sciences, New York University, New York, USA; e-mail: \href{mailto:gleb.pogudin@polytechnique.edu}{gleb.pogudin@polytechnique.edu}} \and Peter Thompson\thanks{Ph.D. Program in Mathematics, CUNY Graduate Center, New York, USA; e-mail: \href{mailto:pthompson@gradcenter.cuny.edu}{pthompson@gradcenter.cuny.edu}}}
\date{}
\begin{document}
\maketitle

\begin{abstract}
Structural parameter identifiability is a property of a differential model with parameters that allows for the parameters to be determined from the model equations in the absence of noise. 
One of the standard approaches to assessing this problem is via input-output equations and, in particular, characteristic sets of differential ideals.
The precise relation between identifiability and input-output identifiability is subtle.
The goal of this note is to clarify this relation.
The main results are:
\begin{itemize}
    \item identifiability implies input-output identifiability;
    \item these notions coincide if the model does not have rational first integrals;
    \item the field of input-output identifiable functions is generated by the coefficients of a ``minimal'' characteristic set of the corresponding differential ideal.
\end{itemize}
We expect that some of these facts may be known to the experts in the area, but we are not aware of any articles in which these facts are stated precisely and rigorously proved.
\end{abstract}

\section{Introduction}
Structural identifiability  is a property of an ODE model with parameters that allows for the parameters to be uniquely determined from the model equations in the absence of noise. Performing  identifiablity analysis is an important first step in evaluating and, if needed, adjusting the model before a reliable practical parameter identification (determining the numerical values of the parameters) is performed. Details on different approaches to assessing  identifiability can be found, for example, in  \cite{comparison,Hong, Villaverde2019}, which also contain additional references showing practical relevance of studying structural identifiabilty in biological models, from animal sciences to oncology.

 In more detail but still roughly speaking, a function of parameters in an ODE model is identifiable if, generically, two different values of the function result in two different values of the output of the model. A preciese formulation of this concept is given in Definition~\ref{def:seid}. These functions of parameters could be just the parameters themselves, in which case we consider the more standard notion of identifiability of individual parameters. However, it could happen that all of the parameters are not identifiable but some non-trivial functions of the parameters are. Since identifiability is a desirable property to have, finding identifiable functions of non-identifiable parameters could be helpful in reparametrizing the model so that the new model has fewer non-identifiable parameters (see an intentionally simple  Example~\ref{ex:same} to illustrate this issue).

Input-output equations have been used to assess structural identifiability for three decades already going back to~\cite{OllivierPhD}, 
and several prominent software packages are based on this approach~\cite{BD16, SADA2003,Meshkat14res,Meshkat18,BEC2013,DAISY,DAISYIFAC,DAISYMED,COMBOS,COMBOS2,MRS2016}.
However, it has been known that input-output identifiability is not always the same as identifiability (\cite[Example~2.16]{Hong}, \cite[Section 5.2~and 5.3]{allident}).
The goal of this note is to state and prove basic facts about these relations, some of which seem to be implicitly assumed in the current literature.
The main results are
\begin{itemize}
    \item identifiability implies input-output identifiability (Theorem~\ref{prop:definclusion});
    \item these notions coincide if the model does not have rational first integrals (Theorem~\ref{prop:no_first_integrals});
    \item the field of input-output identifiable functions is generated by the coefficients of a ``minimal'' characteristic set of the corresponding differential ideal (Corollary~\ref{cor:charsetIO}).
\end{itemize}

The paper is organized as follows. We begin by stating an analytic definition of identifiabily and algebraic definition of input-output identifiability and show a few simple examples comparing these two not equivalent notions in Section~\ref{sec:ident_general}. In Section~\ref{sec:algcrit}, we prove a technical result, an algebraic criterion for identifiability of functions in terms of field extensions, which is typically much easier to use than the analytic Definition~\ref{def:seid}. In Section~\ref{sec:implication}, we establish theoretical connections between identifiability and input-output identifiability. We finish with Section~\ref{sec:charsetcomput}, in which we prove that input-output identifiability can be computed with characteristic sets from differential algebra, introducing the corresponding mathematical background and notation there.


\section{General definition of identifiability}\label{sec:ident_general}

\subsection{Identifiability}
Fix positive integers $\lambda$, $n$, $m$, and $\kappa$ for the remainder of the paper.
Let $\bm{\mu} = (\mu_1,\ldots,\mu_\lambda)$, $\mathbf{x} = (x_1, \ldots, x_n)$, $\mathbf{y} = (y_1, \ldots, y_m)$, and $\mathbf{u} = (u_1,\ldots,u_\kappa)$.  These are called the parameters, the state variables, the outputs, and the inputs, respectively.
Consider a system of ODEs \hypertarget{Sigma}{}
\begin{equation}\label{eq:sigma}
\Sigma = 
\begin{cases}
\mathbf{x}' = \cfrac{\mathbf{f}(\mathbf{x}, \bm{\mu}, \mathbf{u})}{Q(\mathbf{x}, \bm{\mu}, \mathbf{u})}, \\
\mathbf{y} = \cfrac{\mathbf{g}(\mathbf{x}, \bm{\mu}, \mathbf{u})}{Q(\mathbf{x}, \bm{\mu}, \mathbf{u})},\\
\mathbf{x}(0) = \mathbf{x}^\ast,
\end{cases}
\end{equation}
where $\mathbf{f} = (f_1, \ldots, f_n)$ and $\mathbf{g} = (g_1, \ldots, g_m)$ are tuples of elements of $\CC[\bm{\mu},\mathbf{x},\mathbf{u}]$ and $Q\in\CC[\bm{\mu},\mathbf{x},\mathbf{u}] \backslash \{0\}$.

\begin{notation}[Auxiliary analytic notation]
\begin{enumerate}[label = (\alph*),leftmargin=7.5mm]
\item[] 
\item Let $\CC^\infty(0)$ denote the set of all functions that
    are complex analytic in some neighborhood of~$t=0$.
    \item A subset $U\subset \CC^\infty(0)$ is called \emph{Zariski open} if there
exist $h \in \ZZ_{\geqslant 0}$ and a non-zero 
polynomial~$P(u_0, u_1, \ldots, u_h)\in\CC[u_0, \ldots, u_h]$ such that
\[
U=\big\{\whu \in\cinfty\mid P\big(\whu, \whu^{(1)}, \ldots, \whu^{(h)}\big)|_{t=0}\neq0\big\}.
\]
\item Let 
    $\tau(\CC^{s})$
        denote the set of all Zariski open non-empty
    subsets of $\CC^{s}$ and
    $\tau(\CC^\infty(0))$
denote the set of all Zariski open non-empty subsets
of $\CC^\infty(0)$.
    \item Let $\Omega=\{(\hat{\mathbf{x}}^*,\hat{\bm{\mu}},\hat{\mathbf{u}}) \in \CC^n \times \CC^\lambda \times (\CC^\infty(0))^\kappa \mid Q(\hat{\mathbf{x}}^*,\hat{\bm{\mu}},\hat{\mathbf{u}}(0)) \ne 0$\}  and 
    \[
    \Omega_h = \Omega \cap (\{(\hat{\mathbf{x}}^\ast, \hat{\bm{\mu}}) \in \CC^{n + \lambda} \mid h(\hat{\mathbf{x}}^\ast, \hat{\bm{\mu}}) \text{ well-defined}\} \times (\CC^\infty(0))^\kappa)
    \]
    for every given $h \in \CC(\mathbf{x}^\ast, \bm{\mu})$.
    \item For $(\hat{\mathbf{x}}^\ast, \hat{\bm{\mu}}, \hat{\mathbf{u}}) \in \Omega$, let $X(\hat{\mathbf{x}}^\ast, \hat{\bm{\mu}}, \hat{\mathbf{u}})$ and $Y(\hat{\mathbf{x}}^\ast, \hat{\bm{\mu}}, \hat{\mathbf{u}})$ denote the unique solution over $\CC^\infty(0)$ of the instance of $\Sigma$ with $\mathbf{x}^\ast = \hat{\mathbf{x}}^\ast$, $\bm{\mu} = \hat{\bm{\mu}}$, and $\mathbf{u} = \hat{\mathbf{u}}$ (see~\cite[Theorem~2.2.2]{Hille}).
    
\end{enumerate}
\end{notation}
 Definition~\ref{def:seid} given below, being a generalization from individual parameters to functions of parameters, is a precise (and  unambiguous) way of expressing the following widely used analytic understanding of the identifiability concept: a  parameter in~\eqref{eq:sigma} is identifiable if generically two different parameter values result in two different values of the output~\cite{CD1980,WL1981,WL1982,V1983,V1984,LG94,WP1996,WP1997,MRCW2001,DAISY,SADA2003,MXPW2011}. A discussion on the comparison can be found in \cite[Remark~2.6]{Hong}. The complexity of the presentation of Definition~\ref{def:seid} is the price to pay for being precise. 
\begin{definition}[Identifiability, see {\cite[Definition~2.5]{Hong}}]\label{def:seid}
We say that $h(\mathbf{x}^\ast,   \bm{\mu}) \in \CC(\mathbf{x}^\ast,  \bm{\mu})$ is \emph{identifiable} if 
\begin{gather*}
  \exists \Theta \in \tau(\CC^n \times \CC^\lambda) \; \exists U \in \tau((\CC^\infty(0))^\kappa)\;\\
  \forall (\hat{\mathbf{x}}^\ast, \hat{\bm{\mu}}, \hat{\mathbf{u}}) \in (\Theta \times U) \cap \Omega_h \quad |S_h(\hat{\mathbf{x}}^\ast, \hat{\bm{\mu}}, \hat{\mathbf{u}})| = 1,
\end{gather*}
where
\[
  S_h(\hat{\mathbf{x}}^\ast, \hat{\bm{\mu}}, \hat{\mathbf{u}}) := \{h(\tilde{\mathbf{x}}^\ast,  \tilde{\bm{\mu}}) \mid  (\tilde{\mathbf{x}}^\ast, \tilde{\bm{\mu}}, \hat{\mathbf{u}}) \in \Omega_h  \;\text{ and }\; Y(\hat{\mathbf{x}}^\ast, \hat{\bm{\mu}}, \hat{\mathbf{u}}) = Y(\tilde{\mathbf{x}}^\ast, \tilde{\bm{\mu}}, \hat{\mathbf{u}}) \}.
\]
In this paper, we are interested in comparing identifiability and IO-identifiability (Definition~\ref{def:ioid}), and the latter is defined 
for functions in $\bm{\mu}$, not in $\bm{\mu}$ and $\mathbf{x}^\ast$.
Thus, just for the purpose of comparison, we will restrict ourselves to the field \[\{h \in \mathbb{C}(\bm{\mu})\mid h \text{ is identifiable}\},\] which we will call \emph{the field of identifiable functions}.
\end{definition}

\begin{remark}
  The above definition can be extended to functions $h(\mathbf{x}^\ast,  \bm{\mu}) \in \CC(\mathbf{x}^\ast,  \bm{\mu})$ (see Definition~\ref{def:seid}).
  There are software tools that can assess identifiability of initial conditions (e.g., SIAN~\cite{SIAN}).
  Any such tool can be used to assess identifiability of a given function $h(\mathbf{x}^\ast,  \bm{\mu}) \in \CC(\mathbf{x}^\ast,  \bm{\mu})$ by means of the transformation described in~\eqref{eq:identifiable_function} in the proof of Proposition~\ref{prop:idfrac}.
\end{remark}

\subsection{IO-identifiability}

\begin{notation}[Differential algebra]
\begin{enumerate}[label = (\alph*),leftmargin=7.5mm]
  \item A {\em differential ring} $(R,\delta)$ is a commutative ring with a derivation $':R\to R$, that is, a map such that, for all $a,b\in R$, $(a+b)' = a' + b'$ and $(ab)' = a' b + a b'$. 
  \item The {\em ring of differential polynomials} in the variables $x_1,\ldots,x_n$ over a field $K$ is the ring $K[x_j^{(i)}\mid i\geqslant 0,\, 1\leqslant j\leqslant n]$ with a derivation defined on the ring by $(x_j^{(i)})' := x_j^{(i + 1)}$. 
  This differential ring is denoted by $K\{x_1,\ldots,x_n\}$.
  \item An ideal $I$ of a differential ring $(R,\delta)$ is called a {\em differential ideal} if, for all $a \in I$, $\delta(a)\in I$. For $F\subset R$, the smallest differential ideal containing set $F$ is denoted by $[F]$.
    \item \hypertarget{colon}{}\hypertarget{infinity}{}
  For an ideal $I$ and element $a$ in a ring $R$, we denote $I \colon a^\infty = \{r \in R \mid \exists \ell\colon a^\ell r \in I\}$.
  This set is also an ideal in $R$.
  \hypertarget{ISigma}{}
    \item Given $\Sigma$ as in~\eqref{eq:sigma}, we define the differential ideal of $\Sigma$ as $I_\Sigma=[Q\mathbf{x}'-\mathbf{f},Q\mathbf{y}-\mathbf{g}]:Q^\infty \subset \CC(\bm{\mu})\{\mathbf{x},\mathbf{y},\mathbf{u}\}$.
\end{enumerate}
\end{notation}

 The following definition of IO-identifiability captures the most probable, in our opinion, actual intent of prior attempts of defining and computing it via a characteristic set of the prime differential ideal of $I_\Sigma$  \cite{COMBOS,DAISY,DAISYMED,DAISYIFAC}.

For a subclass of models, called linear compartment models, for each output variable, an explicit linear algebra-based formula was proposed in~\cite{Meshkat14res} to find IO-equations to determine IO-identifiability. In general, using these equations instead of the just mentioned characteristic set-based approach would give incorrect results  (see~\cite[Remark~3.11]{Meshkat18}).
However,  \cite[Theorem~3]{OPT19} shows that such an approach is valid  for a large class of linear compartment models. 

We will see in Corollary~\ref{cor:charsetIO} that characteristic sets (more precisely, characteristic presentations) provide a tool of computing IO-identifiability. However, for the purposes of mathematical elegance  and a more explicit connection with other branches of mathematics, e.g., with model theory
(which was recently discovered to be useful for identifiability \cite{second_paper}), we present a definition that is short and avoids  notationally heavy definitions leading to characteristic sets: 

\begin{definition}[IO-identifiability]\label{def:ioid}\hypertarget{fieldio}{}
The smallest field $k$ such that $\CC \subset k \subset \CC(\bm{\mu})$ and  $\hyperlink{ISigma}{I_\Sigma} \cap \CC(\bm{\mu})\{\mathbf{y},\mathbf{u}\}$ is generated (as an ideal or as a differential ideal) by $I_\Sigma \cap k\{\mathbf{y},\mathbf{u}\}$ is called \emph{the field of IO-identifiable functions}.
We call $h \in \CC(\bm{\mu})$  \emph{IO-identifiable} if $h \in k$.
\end{definition}

We will now briefly compare Definitions~\ref{def:seid} and~\ref{def:ioid} by considering  intentionally simple  examples.

\begin{example}\label{ex:same}
Consider the system
\[
\Sigma =
\begin{cases}
x'=(a+b)x\\
y=x.
\end{cases}
\]
So, $\lambda = 2$, $n=m=1$, and $\kappa = 0$. Let us check the identifiability of $h_1(x_1^\ast,a,b) = a$. As there are no denominators, $Q = 1$, and so $\Omega_h = \CC^3$. Let $\Theta \in \tau(\CC^3)$ and $(\hat x_1^\ast,\hat a,\hat b) \in \Theta$. Then
\[
Y(\hat x_1^\ast,\hat a,\hat b) = \hat x_1^\ast e^{(\hat a+\hat b)t}.\]
Hence,
\begin{align*}
S_{h_1}(\hat x_1^\ast,\hat a,\hat b)&=\big\{h_1(\tilde x_1^\ast,\tilde a,\tilde b)\mid (\tilde x_1^\ast,\tilde a,\tilde b) \in \CC^3\ \ \text{and}\ \ Y(\hat x_1^\ast,\hat a,\hat b)=Y(\tilde x_1^\ast,\tilde a,\tilde b)\big\}\\
&=\big\{\tilde a \in \CC \mid \exists(\tilde x_1^\ast,\tilde b) \in \CC^2\ \ \text{such that}\ \ Y(\hat x_1^\ast,\hat a,\hat b)=Y(\tilde x_1^\ast,\tilde a,\tilde b)\big\}\\
&\supset\big\{\tilde a \in \CC \mid \exists(\tilde x_1^\ast,\tilde b) \in \CC^2\ \ \text{such that}\ \ \hat x_1^\ast = \tilde x_1^\ast\ \text{and}\ \hat a+\hat b = \tilde a + \tilde b\big\}\\
&=\big\{\tilde a \in \CC \mid \exists\,\tilde b \in \CC\ \ \text{such that}\ \ \hat a+\hat b - \tilde a = \tilde b\big\} = \CC,
\end{align*}
therefore, by Definition~\ref{def:seid},  $h_1 = a$ is not identifiable.
We will now check the identifiability of $h_2(x_1^\ast,a,b)=a+b$.  Let \[\Theta=\left\{(x_1^\ast,a,b)\in\CC^3\mid x_1^\ast \ne 0\right\}\in \tau(\CC^3)\] and consider any $(\hat x_1^\ast,\hat a,\hat b) \in \Theta$. We have
\begin{align*}
S_{h_2}&(\hat x_1^\ast,\hat a,\hat b)=\big\{h_2(\tilde x_1^\ast,\tilde a,\tilde b)\mid (\tilde x_1^\ast,\tilde a,\tilde b) \in \CC^3\ \ \text{and}\ \ Y(\hat x_1^\ast,\hat a,\hat b)=Y(\tilde x_1^\ast,\tilde a,\tilde b)\big\}\\
&=\big\{\tilde a+\tilde b \mid (\tilde a,\tilde b)\in\CC^2 \ \text{such that}\ \exists\,\tilde x_1^\ast \in \CC\ \ \text{such that}\ \ Y(\hat x_1^\ast,\hat a,\hat b)=Y(\tilde x_1^\ast,\tilde a,\tilde b)\big\}\\
&=\big\{\tilde a+\tilde b \mid (\tilde a,\tilde b)\in\CC^2 \ \text{such that}\ \exists\,\tilde x_1^\ast \in \CC\ \ \text{such that}\ \ \hat x_1^\ast = \tilde x_1^\ast\ \text{and}\ \hat a+\hat b = \tilde a + \tilde b\big\}\\
&=\{\hat a +\hat b\}.
\end{align*}
Therefore, by Definition~\ref{def:seid}, $h_2= a+b$ is identifiable. With this conclusion, it is now natural to consider the following reparametrization of $\Sigma$:
\[
\begin{cases}
x'=cx\\
y=x,
\end{cases}
\]
in which the only parameter $c$ is identifiable.
This shows how considering identifiable functions of parameters rather than just the parameters could be helpful for improving the model.

We will now investigate the IO-identifiability of $\Sigma$. We have \[I_\Sigma=[x'-(a+b)x,y-x]=[y'-(a+b)y,x-y]\subset\CC(a,b)\{x,y\}.\]
Hence $I_\Sigma\cap \CC(a+b)\{y\}$ generates $I_\Sigma \cap \CC(a,b)\{y\} = [y'-(a+b)y]$. 
So $k \subset \CC(a + b)$.
On the other hand,  $I_\Sigma \cap \CC(a, b)[y, y']$ is the principal ideal generated by $f := y' - (a + b)y$  because, for instance, the non-zero solution of~$\Sigma$ being an exponential function, is not algebraic over the constants. Hence, since $f$ has one of the coefficients equal $1$, $k$ must contain $a + b$, so $k = \CC(a + b)$. We will later see in Corollary~\ref{cor:charsetIO} how one can avoid considering actual solutions to find input-output identifiable functions.
\end{example}

We will now consider an example in which the identifiability and IO-identifiability do not coincide.
\begin{example}\label{ex:IOidnotid}
Consider an example of a twisted harmonic oscillator:
\[ \begin{cases}
 x_1' = (\omega + \alpha)x_2\\
x_2' = -\omega x_1\\
  y = x_2
\end{cases}
\]
in which $\alpha$ can be measured separately, so is assumed to be known. 
This can be reflected as follows:
\[ \Sigma =\begin{cases}
 x_1' = (\omega + x_3)x_2\\
x_2' = -\omega
x_1\\
x_3' = 0
\\
  y_1 = x_2\\
  y_2 = x_3.
\end{cases}
\]
So, $\lambda = 1$, $n=3$, $m=2$, and $\kappa =0$. Let $h(x_1^\ast,x_2^\ast,x_3^\ast,\omega) = \omega$, so we are checking the identifiability of $\omega$. As there are no denominators, $Q = 1$, and so $\Omega_h = \CC^4$.
Let $\Theta \in \tau(\CC^4)$ and $(\hat x_1^\ast,\hat x_2^\ast,\hat x_3^\ast,\widehat\omega) \in \Theta$ be such that  $\widehat \omega (\hat x_3^\ast + \widehat \omega) \neq 0$ and $\hat x_3^\ast \neq -2 \widehat \omega$.
Then, denoting the frequency by $\widehat{\varphi} := \sqrt{\widehat{\omega} (\hat{x}_3^\ast + \widehat{\omega})}$, we have
\[
Y(\hat x_1^\ast,\hat x_2^\ast,\hat x_3^\ast,\widehat\omega)=\begin{pmatrix}
 -\hat x_1^\ast\frac{ \widehat\omega }{\widehat{\varphi}} \sin\left(t \widehat{\varphi}\right) + \hat x_2^\ast\cos\left(t \widehat{\varphi}\right)\\
\hat x_3^\ast
\end{pmatrix}.
\]
Note that $Y(\hat x_1^\ast,\hat x_2^\ast,\hat x_3^\ast,\widehat\omega)(0) = \begin{pmatrix}
\hat x_2^\ast\\
\hat x_3^\ast
\end{pmatrix}$.
Therefore,
\begin{align*}
S_h&(\hat x_1^\ast,\hat x_2^\ast,\hat x_3^\ast,\widehat\omega) \\ &=\left\{h(\tilde x_1^\ast,\tilde x_2^\ast,\tilde x_3^\ast,\widetilde\omega)\mid (\tilde x_1^\ast,\tilde x_2^\ast,\tilde x_3^\ast,\widetilde\omega)\in \CC^4\ \  \text{and}\ \ Y(\hat x_1^\ast,\hat x_2^\ast,\hat x_3^\ast,\widehat\omega)=Y(\tilde x_1^\ast,\tilde x_2^\ast,\tilde x_3^\ast,\widetilde\omega)\right\}\\
&=\left\{\widetilde\omega \in \CC\mid \exists (\tilde x_1^\ast,x_2^\ast,x_3^\ast) \in \CC^3\  \ \text{such that}\ \ Y(\hat x_1^\ast,\hat x_2^\ast,\hat x_3^\ast,\widehat\omega)=Y(\tilde x_1^\ast,\tilde x_2^\ast,\tilde x_3^\ast,\widetilde\omega)\right\}\\
&=\left\{\widetilde\omega \in \CC\mid \exists\, \tilde x_1^\ast\in \CC\ \ \text{such that}\ \ Y(\hat x_1^\ast,\hat x_2^\ast,\hat x_3^\ast,\widehat\omega)=Y(\tilde x_1^\ast,\hat x_2^\ast,\hat x_3^\ast,\widetilde\omega)\right\}\\
& \supset\left\{\widetilde\omega \in \CC\:\Big|\:\exists\, \tilde x_1^\ast\in \CC\ \ \text{s.t.}\ \ \hat x_1^\ast\widehat\omega = \tilde x_1^\ast\widetilde\omega \  \ \text{and}\ \ \widehat\omega(\hat x_3^\ast + \widehat\omega) = \widetilde\omega(\hat{x}_3^\ast + \widetilde\omega)\right\}, 
\end{align*}
which has cardinality $2$ because the second conjunct has distinct solutions $\widetilde \omega \in \{\widehat \omega, -(\hat x_3^\ast + \widehat\omega)\}$ and by the first conjunct $\tilde x_1^\ast$ is uniquely determined by the choice of $\widetilde \omega$.
Therefore, by Definition~\ref{def:seid}, $\omega$ is not identifiable.

On the other hand, 
\[
  I_\Sigma = [x_1'-(\omega+x_3)x_2,x_2'+\omega x_1,x_3',y_1-x_2,y_2-x_3] \subset \mathbb{C}(\omega)\{x_1,x_2,x_3,y_1,y_2\}.
\]

One can verify that $I_\Sigma \cap \CC(\omega)[y_1, y_1', y_2] = \{0\}$.
Indeed, if there were a polynomial $p(\omega, y_1, y_1', y_2) \ne 0$ in this intersection, then, for every solution of $\Sigma$, the evaluation at $t = 0$ would imply that $p(\widehat{\omega}, \hat{x}_2^\ast, -\widehat{\omega}\hat{x}_1^\ast, \hat{x}_3^\ast) = 0$ yielding that $\hat{x}_1^\ast, \hat{x}_2^\ast, \hat{x}_3^\ast$, and $\widehat{\omega}$ always satisfy  such a polynomial relation.
But this is not the case because they can be chosen to be any complex numbers.
Therefore, $I_\Sigma \cap \CC(\omega)[y_1, y_1', y_1'', y_2]$ is a principal ideal generated by $f := y_1'' + \omega^2y_1 + \omega y_1y_2$.
Since $f$ has one of its coefficients equal to $1$, the field $k$ from Definition~\ref{def:ioid} must contain $\omega$, so $k = \CC(\omega)$. In particular, $\omega$ is input-output identifiable (but is not identifiable).
A more systematic way of computing this field using characteristic sets, as described in Corollary~\ref{cor:charsetIO} and shown in Example~\ref{ex:5}.

\end{example}


\section{Technical result: algebraic criterion for identifiability}\label{sec:algcrit}
Proposition~\ref{prop:idfrac} extends the algebraic criterion for identifiability~\cite[Proposition~3.4]{Hong} to identifiability of functions of parameters rather than identifiability of just specific parameters themselves.

\begin{proposition}\label{prop:idfrac}
For every $h \in \CC(\mathbf{x}^\ast, \bm{\mu})$, the following are equivalent:
\begin{itemize}[leftmargin=7.5mm]
\item  $h$ is identifiable;
\item the image of $h$ in $\Frac(\CC(\bm{\mu})\{\mathbf{x},\mathbf{y},\mathbf{u}\}/I_\Sigma)$ lies in the field generated by the image of $\CC\{\mathbf{y},\mathbf{u}\}$ in $\Frac(\CC(\bm{\mu})\{\mathbf{x},\mathbf{y},\mathbf{u}\}/I_\Sigma)$.
\end{itemize}
\end{proposition}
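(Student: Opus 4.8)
The plan is to follow and extend the proof of the individual-parameter case in \cite[Proposition~3.4]{Hong}, reducing the analytic statement to a purely field-theoretic one. Since $I_\Sigma$ is prime, $F := \Frac(\CC(\bm{\mu})\{\mathbf{x},\mathbf{y},\mathbf{u}\}/I_\Sigma)$ is a differential field; write $\bar{\mathbf{x}},\bar{\mathbf{y}},\bar{\mathbf{u}}$ for the images of the generators and let $L := \CC\langle \bar{\mathbf{y}},\bar{\mathbf{u}}\rangle$ be the subfield generated by the image of $\CC\{\mathbf{y},\mathbf{u}\}$, so that the second bullet asserts $\bar h \in L$. First I would record the structural facts that drive everything: the order-zero state images $\bar x_1,\ldots,\bar x_n$ together with $\bm{\mu}$ are algebraically independent over $\CC$ (the initial conditions being unconstrained), so $x_j^\ast \mapsto \bar x_j$, $\mu_i \mapsto \mu_i$ realizes an embedding $\CC(\mathbf{x}^\ast,\bm{\mu}) \hookrightarrow F$ (this is the meaning of ``the image of $h$''); moreover every derivative $\bar x_j^{(i)}$ is a rational function of $\bar{\mathbf{x}},\bm{\mu},\bar{\mathbf{u}}$ and its derivatives, so in fact $F = L(\bm{\mu},\bar{\mathbf{x}})$ as a finitely generated field extension of $L$. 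Writing $h = h_1/h_2$ with $h_i \in \CC[\mathbf{x}^\ast,\bm{\mu}]$, the rational case is handled by clearing the denominator $h_2$ over the Zariski-open domain $\Omega_h$ on which it is defined.

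The bridge between Definition~\ref{def:seid} and this field picture is the correspondence between analytic solutions and specializations of the generic point of $I_\Sigma$. Keeping the input symbolic and evaluating the unique solution at $t=0$, the assignment sending $(\mathbf{x}^\ast,\bm{\mu})$ and the input jets to the output and input jets at $t=0$ is a dominant rational map $\pi$ whose function field is exactly $L$; here one uses that the output trajectory is determined by finitely many Taylor coefficients and that two analytic outputs coincide iff all their coefficients at $t=0$ agree. The generic fibre of $\pi$ is then precisely the set of $(\tilde{\mathbf{x}}^\ast,\tilde{\bm{\mu}})$ producing, for the same input, the same output as $(\hat{\mathbf{x}}^\ast,\hat{\bm{\mu}})$, so that $S_h(\hat{\mathbf{x}}^\ast,\hat{\bm{\mu}},\hat{\mathbf{u}})$ is, up to genericity, the set of values $h$ takes on that fibre, and identifiability says exactly that $h$ is constant on the generic fibre of $\pi$.

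With this dictionary in place the proposition becomes the standard fact that a rational function is constant on the generic fibres of a dominant rational map iff it lies in the pulled-back function field of the base, here $\pi^\ast\CC(W) = L$. For the implication ``$\bar h \in L \Rightarrow$ identifiable'', I would write $\bar h = A(\bar{\mathbf{y}},\bar{\mathbf{u}})/B(\bar{\mathbf{y}},\bar{\mathbf{u}})$ for differential polynomials $A,B$ over $\CC$ with $B(\bar{\mathbf{y}},\bar{\mathbf{u}}) \ne 0$, clear denominators to obtain a congruence modulo $I_\Sigma$, and specialize it along a generic solution: this exhibits $h(\hat{\mathbf{x}}^\ast,\hat{\bm{\mu}})$ as a fixed rational function of the output and input jets, so $|S_h| = 1$ on the Zariski-open set where $B$ does not vanish. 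For the converse I would argue contrapositively: if $\bar h \notin L$ then $\bar h$ is either transcendental over $L$ or algebraic of degree at least $2$, and in either case it assumes at least two values on the generic fibre of $\pi$, forcing $|S_h| \geqslant 2$ on a Zariski-dense set and contradicting identifiability.

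The field theory above is formal; the real work, and the main obstacle, is making the analytic-to-algebraic bridge of the second paragraph rigorous and uniform. This means transferring the Zariski topology on the infinite-dimensional space $(\CC^\infty(0))^\kappa$ to finite-dimensional genericity, showing that on a Zariski-dense set the output function is faithfully captured by a finite jet at $t=0$ (a bound coming from the fact that the relevant differential ideal is finitely generated up to some order), and---most delicately, in the forward direction---upgrading the abstract ``second point of the generic fibre'' into a genuine convergent analytic solution with the same output for a Zariski-dense set of base points, rather than merely a formal or generic-point-level coincidence. Carrying the domain $\Omega_h$ of the rational function $h$ through all of these genericity reductions is the bookkeeping that must be handled with care.
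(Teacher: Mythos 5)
Your plan correctly identifies the field-theoretic reformulation ($\bar h \in L$) and the right dictionary (identifiability amounts to constancy of $h$ on generic fibres of the solutions-to-output-jets map), but as written it is not a proof: the entire burden rests on the analytic-to-algebraic bridge of your second paragraph, and you yourself flag that bridge as ``the real work, and the main obstacle'' without carrying it out. Concretely, two steps are missing and neither is routine: (i) replacing the infinite-dimensional Zariski topology on $(\CC^\infty(0))^\kappa$ and the infinitely many output jets by finite-dimensional genericity statements, uniformly in the base point; and (ii), in the contrapositive direction, converting ``$\bar h$ takes a second value on the generic fibre'' --- a statement about the generic point of $I_\Sigma$ --- into an actual pair of convergent analytic solutions with identical outputs, valid over a Zariski-open set of $(\hat{\mathbf{x}}^\ast,\hat{\bm{\mu}},\hat{\mathbf{u}})$. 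Executing (i) and (ii) in your generality amounts to re-proving \cite[Proposition~3.4]{Hong} from scratch for arbitrary rational functions of states and parameters; that is exactly the content your argument needs but does not supply.

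The paper's proof avoids this analytic work entirely by a reduction trick that lets it invoke \cite[Proposition~3.4]{Hong} as a black box. It augments $\Sigma$ to a system $\Sigma_1$ with one new state variable $x_{n+1}$ satisfying $x_{n+1}' = \sum_i f_i\,\partial h/\partial x_i$ (so $x_{n+1}' = h'$ along trajectories) and one new output $y_{m+1} = x_{n+1} - h$, which is constant in $t$. Then: Step 1 (an elementary manipulation of the sets $S_h$) shows $h$ is $\Sigma$-identifiable iff the initial condition $x_{n+1}^\ast$ is $\Sigma_1$-identifiable; Step 2 (a purely algebraic argument using the differential automorphism of $\F_1 = \F(x_{n+1})$ fixing $\F$ and sending $x_{n+1} \mapsto x_{n+1}+1$, under which invariant elements of $\E_1 = \E(y_{m+1})$ lie in $\E$) shows $h \in \E$ iff $x_{n+1} \in \E_1$; and Step 3 applies the cited equivalence verbatim to $x_{n+1}^\ast$, since that result already covers identifiability of initial conditions. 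If you want to salvage your approach, the fastest repair is the same maneuver: do not rebuild the analytic bridge --- reduce your statement to the already-proved special case by adjoining a state variable that tracks $h$.
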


\begin{example}
For $\Sigma$ from Example~\ref{ex:same}, we have $I_\Sigma = [x'-(a+b)x,y-x]$, and see that 
\begin{align*}
L := \Frac(\CC(a,b)\{x,y\}/I_\Sigma) &= \Frac(\CC(a,b)\{x,y\}/[x'-(a+b)x,y-x] \\
&=\Frac(\CC(a,b)\{x,y\}/[y'-(a+b)y,y-x]. 
\end{align*}
Hence, the field of fractions of the image of $\CC(a,b)\{y\}$ in $L$ is  \[M :=\Frac\left(\CC(a,b)\{y\}/[y'-(a+b)y]\right).\]
Since $a+b = y'/y$, we have $h_2 = a+b \in M$, and so $h_2$ is identifiable by Proposition~\ref{prop:idfrac}.
\end{example}

\begin{proof}
  Write $h = h_1/h_2$, where $h_1,h_2 \in \CC[\mathbf{x}^\ast, \bm{\mu}]$.  
  Let $\F = \Frac(\CC(\bm{\mu})\{\mathbf{x},\mathbf{y},\mathbf{u}\}/I_\Sigma)$ and  
  $\E$ the subfield generated by the image of $\CC\{\mathbf{y},\mathbf{u}\}$  in $\F$.  
  Let $\Sigma_1$ be the system of equations obtained by adding
  \begin{equation}\label{eq:identifiable_function}
  \begin{aligned}
      &x_{n+1}' = \sum\limits_{i = 1}^n f_i \frac{\partial h}{\partial x_i},\\ 
      &y_{m+1} = x_{n+1} - h,\\
      &x_{n+1}(0) = x_{n+1}^\ast     
  \end{aligned}
  \end{equation} 
  to \hyperlink{Sigma}{$\Sigma$}, where $x_{n+1}$ is a new state variable and $y_{m+1}$ is a new output.
 Note that $x_{n + 1}' = h'$ and $y_{m + 1}' = 0$.
  We define 
  \[
    \F_1 = \Frac(\CC(\bm{\mu})\{\mathbf{x}, x_{n + 1}, \mathbf{y}, y_{m + 1},\mathbf{u}\}/I_{\Sigma_1}),
  \] 
  and let $\E_1$ be the subfield generated by the image of $\CC\{\mathbf{y}, y_{m + 1}, \mathbf{u}\}$ in $\F_1$.
  We will talk about $\Sigma$-identifiability of $h$ and $\Sigma_1$-identifiability of $x_{n + 1}^\ast$. 
  The proof will proceed in the following three steps.

  \begin{enumerate}[align=left,itemindent=0.08in,leftmargin=0in, label = \textbf{Step~\arabic*}.]
     \item\label{step:ident} \underline{\emph{$h$ is $\Sigma$-identifiable $\iff$ $x_{n + 1}^\ast$ is $\Sigma_1$-identifiable.}}
     Assume that $h$ is $\Sigma$-identifiable. 
     Let $\Theta$ and $U$ be the corresponding open subsets from Definition~\ref{def:seid}.
     We set 
     \[
        \Theta_1 := \{ (\hat{\mathbf{x}}^\ast, \hat{x}_{n + 1}^\ast, \hat{\bm{\mu}}) \mid (\hat{\mathbf{x}}^\ast, \hat{\bm{\mu}}) \in \Theta \;\&\; h_2(\hat{\mathbf{x}}^\ast, \hat{\bm{\mu}}) \neq 0\}.
     \]
     We will show that $x_{n + 1}^\ast$ is identifiable with the open sets from Definition~\ref{def:seid} being $\Theta_1$ and~$U$.
     Let $\Omega_1$ be the set $\Omega$ for the model $\Sigma_1$, and consider 
     $(\hat{\mathbf{x}}^\ast, \hat{x}_{n + 1}^\ast, \hat{\bm{\mu}}, \hat{\mathbf{u}}) \in (\Theta_1 \times U) \cap \Omega_1$.
     Since, for a fixed known value of $y_{m + 1}$, the values of $x_{n + 1}^\ast$ and $h(\mathbf{x}^\ast, \bm{\mu})$ uniquely determine each other, we have
     \[
      |S_{x_{n + 1}^\ast}(\hat{\mathbf{x}}^\ast, \hat{x}_{n + 1}^\ast, \hat{\bm{\mu}}, \hat{\mathbf{u}})| = |S_{h}(\hat{\mathbf{x}}^\ast, \hat{\bm{\mu}}, \hat{\mathbf{u}})| = 1.
     \]
     Thus, $x_{n + 1}^\ast$ is $\Sigma_1$-identifiable.

     For the other direction, assume that $x_{n + 1}^\ast$ is $\Sigma_1$-identifiable, and $\Theta_1$ and $U_1$ are the corresponding open sets from Definition~\ref{def:seid}.
     Let $\Theta$ be the projection of $\Theta_1$ onto all of the coordinates except for $x_{n + 1}^\ast$.
     We will show that $h$ is $\Sigma$-identifiable with the open sets being $\Theta$ and $U_1$.
     Consider $(\hat{\mathbf{x}}^\ast, \hat{\bm{\mu}}, \hat{\mathbf{u}}) \in (\Theta \times U_1) \cap \Omega_h$.
     Let $\hat{x}_{n + 1}^\ast \in \CC$ be such that $(\hat{\mathbf{x}}^\ast, \hat{x}_{n + 1}^\ast, \hat{\bm{\mu}}) \in \Theta_1$.
     Then, using the fact that $y_{m + 1}$ is constant so is equal to its initial condition, we have
     \begin{align*}
        &1  = |S_{x_{n + 1}^\ast} (\hat{\mathbf{x}}^\ast, \hat{x}_{n + 1}^\ast, \hat{\bm{\mu}}, \hat{\mathbf{u}})| \\
        &= |\{ \tilde{x}_{n + 1}^\ast \mid (\tilde{\mathbf{x}}^\ast, \tilde{\bm{\mu}}, \hat{\mathbf{u}}) \in \Omega_h,\; \tilde{x}_{n + 1}^\ast \in \CC \text{ and } Y_1(\hat{\mathbf{x}}^\ast, \hat{x}_{n + 1}^\ast, \hat{\bm{\mu}}, \hat{\mathbf{u}}) = Y_1(\tilde{\mathbf{x}}^\ast, \tilde{x}_{n + 1}^\ast, \tilde{\bm{\mu}}, \hat{\mathbf{u}}) \}| \\
        &= \left|\left\{ h(\tilde{\mathbf{x}}^\ast, \tilde{\bm{\mu}}) \:\bigg|\: (\tilde{\mathbf{x}}^\ast, \tilde{\bm{\mu}}, \hat{\mathbf{u}}) \in \Omega_h, \tilde{x}_{n + 1}^\ast \in \CC \text{ and } \begin{cases} 
          Y(\hat{\mathbf{x}}^\ast, \hat{\bm{\mu}}, \hat{\mathbf{u}}) = Y(\tilde{\mathbf{x}}^\ast, \tilde{\bm{\mu}}, \hat{\mathbf{u}}),\\
          \hat{x}^\ast_{n + 1} - h(\hat{\mathbf{x}}^\ast, \hat{\bm{\mu}}) = \tilde{x}^\ast_{n + 1} - h(\tilde{\mathbf{x}}^\ast, \tilde{\bm{\mu}})
        \end{cases} \right\}\right|\\
        &= |S_h(\hat{\mathbf{x}}^\ast, \hat{\bm{\mu}}, \hat{\mathbf{u}})|.
     \end{align*}

     \item\label{step:fields} \underline{\emph{$h \in \E \iff x_{n + 1} \in \E_1$.}}
     Observe that we have natural embeddings $\F \hookrightarrow \F_1$ and $\E \hookrightarrow \E_1$.
     If $h \in \mathcal E$, then $x_{n + 1} = y_{m + 1} + h \in \E_1$.

     Assume that $x_{n + 1} \in \E_1$.
     Then $h = x_{n + 1} - y_{m + 1} \in \E_1$.
     Observe that $\F_1 = \F(x_{n + 1})$, and $x_{n + 1}$ is transcendental over $\F$.
    Since none of the right-hand sides of the equations for the state variables involves $x_{n + 1}$, 
      there is a differential automorphism $\alpha\colon \F_1 \to \F_1$ such that $\alpha(x_{n + 1}) = x_{n + 1} + 1$ and $\alpha|_{\F} = \operatorname{id}$.
     Since $\alpha(y_{m + 1}) = y_{m + 1} + 1$, we have $\alpha(\E_1) \subset \E_1$.
     Since $\E_1 = \E(y_{m + 1})$ and $\alpha(y_{m + 1}) = y_{m + 1} + 1$, every $\alpha$-invariant element of $\E_1$ belongs to $\E$.
     Since $\alpha(h) = h$, we have $h \in \mathcal E$.

     \item  From~\ref{step:ident}, $h$ is identifiable if and only if $x_{n + 1}^\ast$ is $\Sigma_1$-identifiable.
     By \cite[Proposition~3.4 (a) $\iff$ (c); Remark~2.2]{Hong}, $x_{n+1}^\ast$ is $\Sigma_1$-identifiable if and only if $x_{n+1} \in \E_1$.
     Finally, \ref{step:fields} implies that $x_{n + 1} \in \E_1$ if and only if $h \in \E$.
\end{enumerate}
\end{proof}


\section{Identifiability and IO-identifiability}\label{sec:implication}

\subsection{Identifiability $\implies$ IO-identifiability but not the other way around}

\begin{remark}
 We have already seen an ODE model in which all parameters are IO-identifiable but are not identifiable (Example~\ref{ex:IOidnotid}).  Real-life examples of ``slow-fast ambiguity'' in chemical reactions and of a 
Lotka-Volterra model with the same conclusion can be found in~\cite[Sections~5.2 and~5.3]{allident}.
\end{remark}

\begin{theorem}\label{prop:definclusion} 
For all \hyperlink{Sigma}{$\Sigma$} and $h \in\mathbb{C}(\hyperlink{vars}{\bm{\mu}})$,
\[
  h\text{ is identifiable }\implies h\text{ is IO-identifiable } 
\]
\end{theorem}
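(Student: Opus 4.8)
The plan is to translate both hypotheses into statements about the input-output ideal $I_Y := I_\Sigma \cap \CC(\bm{\mu})\{\mathbf{y},\mathbf{u}\}$ and then run a Galois-descent argument over the algebraic closure of $\CC(\bm{\mu})$. First, by Proposition~\ref{prop:idfrac}, identifiability of $h$ means that the image of $h$ in $\F := \Frac(\CC(\bm{\mu})\{\mathbf{x},\mathbf{y},\mathbf{u}\}/I_\Sigma)$ lies in the subfield $\E$ generated by the image of $\CC\{\mathbf{y},\mathbf{u}\}$. Since $\E$ is the fraction field of the image of the domain $\CC\{\mathbf{y},\mathbf{u}\}$, this is equivalent to the existence of $P, R \in \CC\{\mathbf{y},\mathbf{u}\}$ with $R \notin I_\Sigma$ and $hR - P \in I_\Sigma$. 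As $h \in \CC(\bm{\mu})$ while $P, R$ have coefficients in $\CC$, the element $hR - P$ lies in $\CC(\bm{\mu})\{\mathbf{y},\mathbf{u}\}$, whence $hR - P \in I_Y$ and $R \notin I_Y$.

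Next, set $\overline{K} := \overline{\CC(\bm{\mu})}$ and let $\overline{I_Y} := I_Y\,\overline{K}\{\mathbf{y},\mathbf{u}\}$ be the extension of $I_Y$. Because $\overline{K}\{\mathbf{y},\mathbf{u}\}$ is faithfully flat over $\CC(\bm{\mu})\{\mathbf{y},\mathbf{u}\}$, we have $\overline{I_Y} \cap \CC(\bm{\mu})\{\mathbf{y},\mathbf{u}\} = I_Y$, so $hR - P \in \overline{I_Y}$ while $R \notin \overline{I_Y}$. Consider any field automorphism $\sigma$ of $\overline{K}$ fixing $\CC$, extended to a differential automorphism of $\overline{K}\{\mathbf{y},\mathbf{u}\}$ that acts on coefficients and fixes every $y_j^{(i)}$ and $u_j^{(i)}$; this commutes with the derivation since $\overline{K}$ consists of constants. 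Suppose $\sigma$ stabilizes $\overline{I_Y}$. Applying $\sigma$ to $hR - P \in \overline{I_Y}$ and using $\sigma(P) = P$, $\sigma(R) = R$ yields $\sigma(h)R - P \in \overline{I_Y}$; subtracting gives $(h - \sigma(h))R \in \overline{I_Y}$. Since $h - \sigma(h)$ is a scalar in $\overline{K}$, it would be a unit if nonzero, forcing $R \in \overline{I_Y}$, a contradiction. Hence $\sigma(h) = h$, so $h$ is fixed by the stabilizer $G := \{\sigma \in \Aut(\overline{K}/\CC) \mid \sigma(\overline{I_Y}) = \overline{I_Y}\}$. Note that no primality of $I_Y$ is required here, only that a nonzero scalar is invertible.

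Finally, I would close the argument with the field-of-definition description of $k$. By Definition~\ref{def:ioid}, $I_Y$ is generated by $I_Y \cap k\{\mathbf{y},\mathbf{u}\}$, so $\overline{I_Y}$ is generated by polynomials with coefficients in $k$ and is therefore stabilized by every $\sigma \in \Aut(\overline{K}/k)$; that is, $\Aut(\overline{K}/k) \subseteq G$. Consequently the fixed field of $G$ is contained in the fixed field of $\Aut(\overline{K}/k)$, which equals $k$ because $k$ is perfect (characteristic $0$) and $\overline{K}$ is algebraically closed. Combining this with the previous paragraph, $h \in k$, i.e.\ $h$ is IO-identifiable.

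The step I expect to be the main obstacle is this last one, and in particular the necessity of passing to the algebraic closure rather than working inside $\CC(\bm{\mu})$ directly: for a non-normal algebraic extension (for instance a Lüroth subfield $k = \CC(\theta) \subseteq \CC(\mu)$ whose generator $\theta$ has trivial automorphism group) the fixed field of $\Aut(\CC(\bm{\mu})/k)$ can be strictly larger than $k$, so the descent would fail over $\CC(\bm{\mu})$. Over $\overline{K}$ the identity $\mathrm{Fix}\big(\Aut(\overline{K}/k)\big) = k$ supplies exactly what is needed, and the faithfully-flat base change guarantees that passing the relation $hR - P$ up to $\overline{K}$ does not accidentally place $R$ into the ideal.
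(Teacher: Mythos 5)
Your argument is correct, and although it shares the paper's basic mechanism --- produce a relation $hR - P$ with $P, R \in \CC\{\mathbf{y},\mathbf{u}\}$ via Proposition~\ref{prop:idfrac}, hit it with an automorphism of $\overline{\CC(\bm{\mu})}$ fixing the IO-field $k$, and subtract to force $\sigma(h) = h$ --- the supporting machinery is genuinely different and in places lighter than the paper's. The paper extends the entire model ideal to a differential ideal $P \subset \overline{\CC(\bm{\mu})}\{\mathbf{x},\mathbf{y},\mathbf{u}\}$ and uses two nontrivial properties of it: that $P$ is prime (a fact justified only later, in the proof of Corollary~\ref{cor:charsetIO}, by citing \cite{Hong}), which is used to split $g(h-\sigma(h)) \in P$ into cases, and that $P \cap \CC(\bm{\mu})\{\mathbf{x},\mathbf{y},\mathbf{u}\} = I_\Sigma$. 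You instead never leave $\overline{\CC(\bm{\mu})}\{\mathbf{y},\mathbf{u}\}$: you extend only the IO-ideal $I_Y := I_\Sigma \cap \CC(\bm{\mu})\{\mathbf{y},\mathbf{u}\}$, obtain the contraction $\overline{I_Y} \cap \CC(\bm{\mu})\{\mathbf{y},\mathbf{u}\} = I_Y$ from faithful flatness of the (free) base change, and dispose of primality entirely by noting that a nonzero element of $\overline{\CC(\bm{\mu})}$ is a unit, so $(h - \sigma(h))R \in \overline{I_Y}$ together with $R \notin \overline{I_Y}$ already forces $\sigma(h) = h$. (The paper's case split could likewise be avoided by this unit observation, but the paper would still need primality-type input for its contraction claim $P \cap \CC(\bm{\mu})\{\mathbf{x},\mathbf{y},\mathbf{u}\} = I_\Sigma$, which your flatness argument replaces.) Two small remarks: your final step, that the fixed field of $\Aut\bigl(\overline{\CC(\bm{\mu})}/k\bigr)$ is $k$ for a characteristic-zero subfield $k$ of the algebraically closed field $\overline{\CC(\bm{\mu})}$, is exactly \cite[Theorem~9.29, p.~117]{milneFT} --- the same theorem the paper invokes --- and deserves that citation rather than a one-line justification, since $\overline{\CC(\bm{\mu})}/k$ is not algebraic and this is not ordinary infinite Galois theory; and your closing diagnosis of why descent inside $\CC(\bm{\mu})$ itself would fail for non-normal subfields is accurate and is precisely the reason both proofs pass to the algebraic closure.
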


\begin{proof}
Let $h \in \CC(\bm{\mu})$ be identifiable.  
By Proposition~\ref{prop:idfrac}, there exist $g \in \CC\{\mathbf{y},\mathbf{u}\}\backslash \hyperlink{ISigma}{I_\Sigma}$ and $w \in \CC\{\mathbf{y},\mathbf{u}\}$ such that $gh + w \in I_\Sigma$.  Therefore, there exist $m_1,\ldots,m_r \in \CC(\bm{\mu})\{\mathbf{y},\mathbf{u}\}$ and $p_1,\ldots,p_r \in I_\Sigma \cap \hyperlink{fieldio}{k}\{\mathbf{y},\mathbf{u}\}$ such that 
\begin{equation}\label{eq:quot}
gh+w = m_1p_1+\ldots+m_rp_r.
\end{equation}
Suppose $h \not\in k$.  
By \cite[Theorem 9.29, p. 117]{milneFT}, there exists an automorphism $\sigma$ on $\overline{\CC(\bm{\mu})}$ that fixes $k$ pointwise and such that $\sigma(h) \neq h$.  Let $R_1 := \overline{\CC(\bm{\mu})}\{\mathbf{x},\mathbf{y},\mathbf{u}\}$.  
We extend $\sigma$ to $R_1$ by letting $\sigma$ fix $\mathbf{x}$, $\mathbf{y}$, and $\mathbf{u}$.  Applying $\sigma$ to \eqref{eq:quot} and subtracting the two equations yields
\begin{equation}\label{eq:quot2}
g(h-\sigma(h)) = (m_1-\sigma(m_1))p_1+\ldots+(m_r-\sigma(m_r))p_r
\end{equation}
in $R_1$.  Let $P$ denote the differential ideal generated by $\Sigma$ in $R_1$. 
Since $P$ is a prime differential ideal and the right-hand side of \eqref{eq:quot2} belongs to $P$, it follows that either $g \in P$ or $h-\sigma(h) \in P$.  
But since $h-\sigma(h)$ is a non-zero element of $\overline{\CC(\bm{\mu})}$ and $P$ is a proper ideal, it cannot be that $h-\sigma(h) \in P$.  
Therefore, $g \in P$.  Hence, $g \in P \cap R = I_\Sigma$, contradicting our assumption on $g$.
\end{proof}


\subsection{Sufficient condition for ``$\text{identifiable} \iff \text{IO-identifiable}$''}
The aim of this section is Theorem~\ref{prop:no_first_integrals}, which gives a sufficient condition for the fields of identifiable and IO-identifiable functions to coincide.

\begin{notation}\label{not:basic_diffalg}
\begin{itemize}[leftmargin=7.5mm]
  \item[]
  \item For a differential ring $(R,\delta)$, its ring of {\em constants} is $C(R) := \{r\in R\mid \delta(r)=0\}$.\hypertarget{Wr}{}
  \item For elements $a_1, \ldots, a_N$ of a differential ring, let $\Wr_M(a_1, \ldots, a_N)$ denote the $M \times N$ Wronskian matrix of $a_1, \ldots, a_N$, that is,
  \[
    \Wr_M(a_1, \ldots, a_N)_{i, j} = a_j^{(i - 1)}, \quad 1 \leqslant j \leqslant N,\; 1\leqslant i \leqslant M.
  \]
\end{itemize}
\end{notation}

\begin{definition}[Field of definition]
Let $L \subseteq K$ be fields and let $X$ be a (possibly infinite) set of variables.  Let $I$ be an ideal of $K[X]$.  We say the \emph{field of definition of $I$ over $L$} is the smallest (with respect to inclusion) field $k$, $L \subseteq k \subseteq K$, such that $I$ is generated by $I \cap k[X]$.
\end{definition}

\begin{remark}
For a given $X$ and $I$, the field of definition of $K$ over $\Q$ is what is called the field of definition of $K$ (with no reference to a subfield) in \cite[Definition and Theorem 3.4, p. 55]{marker}.
By \cite[Theorem 3.4]{marker}, for every $K$ and $I$, there is a smallest field $k_0 \subseteq K$ such that $I$ is generated by $I \cap k_0[X]$.  The smallest intermediate field $k$, $L \subseteq k \subseteq K$, such that $I$ is generated by $I \cap k[X]$ is equal to the smallest subfield of $K$ containing $L$ and $k_0$.  Therefore, for every $L$, $K$, and $I$, the field of definition of $I$ over $L$ is well defined.
\end{remark}

\begin{lemma}[{{cf.~\cite[Section~4.1]{DVJBNP01}, \cite[Section~3.4]{MXPW11}, and \cite[Section~V.]{XiaMoog}}}]\label{lem:non0wron}
Let $g \in \hyperlink{ISigma}{I_\Sigma}$ be such that we can write $g = \sum_{i=1}^N a_iz_i$, where $N \geqslant 2$, $a_i \in \CC(\bm{\mu}) \backslash \{0\}$, $a_1 = 1$, and $z_1,\ldots,z_N$ are distinct \hyperlink{monomial}{monomials} in $\CC\{\mathbf{y},\mathbf{u}\}$.
If for some $Z \subsetneq \{z_1,\ldots,z_N\}$ of size $N-1$ it holds that $\det \hyperlink{Wr}{\Wr_{N - 1}}(Z) \not\in \hyperlink{DiffIdeal}{I_\Sigma}$, then $a_i$ is identifiable for all $i=1,\ldots,N$.
\end{lemma}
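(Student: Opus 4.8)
The plan is to reduce to the algebraic criterion of Proposition~\ref{prop:idfrac}: writing $\F := \Frac(\CC(\bm{\mu})\{\mathbf{x},\mathbf{y},\mathbf{u}\}/I_\Sigma)$ and letting $\E$ be the subfield of $\F$ generated by the image of $\CC\{\mathbf{y},\mathbf{u}\}$, it suffices to prove that the image of each $a_i$ in $\F$ lies in $\E$. The starting point is that the $a_i$ are constants for the derivation, so differentiating $g = \sum_{i=1}^N a_i z_i$ repeatedly gives $g^{(j)} = \sum_{i=1}^N a_i z_i^{(j)}$ for all $j \geqslant 0$. Because $I_\Sigma$ is a differential ideal containing $g$, every $g^{(j)}$ lies in $I_\Sigma$, and passing to $\F$ yields the homogeneous linear system $\sum_{i=1}^N a_i \bar{z}_i^{(j)} = 0$ (for $j \geqslant 0$), where $\bar{z}_i^{(j)}$ is the image of $z_i^{(j)}$. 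Note that each $\bar{z}_i^{(j)}$ lies in $\E$, since $z_i^{(j)} \in \CC\{\mathbf{y},\mathbf{u}\}$.

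First I would keep only the equations for $j = 0, \ldots, N-2$. Their coefficient matrix $B$ is the $(N-1) \times N$ matrix with columns indexed by $z_1, \ldots, z_N$ and rows the derivative orders $0$ through $N-2$; deleting the single column whose monomial is not in $Z$ produces exactly $\Wr_{N-1}(Z)$ (up to a reordering of columns), whose image in $\F$ is nonzero by hypothesis. Hence $B$ has rank $N-1$ over $\F$, its kernel is one-dimensional, and that kernel is spanned by the vector $v$ of signed maximal minors of $B$ (the cofactor vector), every entry of which lies in $\E$ because the entries of $B$ do.

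Next, the tuple $a = (a_1, \ldots, a_N)$ is a solution of the system and is nonzero (as $a_1 = 1$), so $a = cv$ for some $c \in \F$. The normalization $a_1 = 1$ forces $v_1 \neq 0$ and $c = 1/v_1$, whence $a_i = v_i/v_1 \in \E$ for every $i$. Proposition~\ref{prop:idfrac} then gives that each $a_i$ is identifiable, as desired.

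The only genuine obstacle is to keep the computation inside $\E$ rather than merely inside $\F$, since it is membership in $\E$ --- not in the possibly larger field $\F$ --- that encodes identifiability; this is exactly where the Wronskian hypothesis is used, guaranteeing both that the system has rank $N-1$ and that the resulting cofactor expressions have a nonzero, $\E$-valued denominator. If one prefers to avoid the cofactor vector, the same conclusion follows by transferring the column outside $Z$ to the right-hand side and solving the square system by Cramer's rule, splitting into the cases $z_1 \in Z$ and $z_1 \notin Z$ in order to exploit the normalization $a_1 = 1$.
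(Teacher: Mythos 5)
Your proof is correct and follows essentially the same route as the paper's: both differentiate the relation $N-2$ times, use the hypothesis that $\det \Wr_{N-1}(Z)$ is nonzero modulo $I_\Sigma$ to solve the resulting linear system over $\Frac(\CC(\bm{\mu})\{\mathbf{x},\mathbf{y},\mathbf{u}\}/I_\Sigma)$, and conclude membership of the $a_i$ in the subfield generated by the image of $\CC\{\mathbf{y},\mathbf{u}\}$ via Proposition~\ref{prop:idfrac}. The only difference is a cosmetic repackaging of the linear algebra: you keep the system homogeneous and span its kernel by the cofactor (signed maximal minor) vector, whereas the paper first divides by $a_t$, moves $z_t$ to the right-hand side, and inverts the square Wronskian; both exploit the normalization $a_1 = 1$ in the same way.
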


\begin{proof}
  Suppose $\det\Wr_{N - 1}(z_1,\ldots,z_{t-1},z_{t+1},\ldots,z_N) \not\in I_\Sigma$.
Modulo $I_\Sigma$, we have
\begin{equation}\label{eq:wronsk_original_relation}
    \sum_{i\neq t}\frac{a_i}{a_t}z_i = -z_t
\end{equation}
Since $I_\Sigma$ is a differential ideal, the derivatives of~\eqref{eq:wronsk_original_relation} are also true.  
Differentiating~\eqref{eq:wronsk_original_relation} $N - 2$ times, we obtain the following linear system:
\[
M\left(\frac{a_1}{a_t},\ldots,\frac{a_{t-1}}{a_t},\frac{a_{t+1}}{a_t},\ldots,\frac{a_N}{a_t}\right)^T = -(z_t,\ldots,z_t^{(N-2)})^T,
\]
where $M = \Wr_{N-1}(z_1,\ldots,z_{t-1},z_{t+1},\ldots,z_N)$.
Since $M$ is nonsingular modulo $I_\Sigma$, in $\Frac(\CC(\bm{\mu})\{\mathbf{x}, \mathbf{y}\} / I_\Sigma)$, we have
\[
\left(\frac{a_1}{a_t},\ldots,\frac{a_{t-1}}{a_t},\frac{a_{t+1}}{a_t},\ldots,\frac{a_N}{a_t}\right) = (-z_t,\ldots,z_t^{(N-2)})(M^{-1})^T.
\]
Since the entries of the right-hand side belong to the subfield generated by $\CC\{\mathbf{y}, \mathbf{u}\}$, the entries of the left-hand side are identifiable by Proposition~\ref{prop:idfrac}. Since $a_1 = 1$, $a_t$ is identifiable and it follows that $a_2,\ldots,a_N$ are identifiable.
\end{proof}

\begin{theorem}\label{prop:no_first_integrals}
Assume that model $\Sigma$ does not have rational first integrals (i.e., first integrals that are rational functions in the parameters and state variables), that is, the constants of $\Frac(\CC(\bm{\mu})\{\mathbf{x}, \mathbf{y}, \mathbf{u}\} / \hyperlink{ISigma}{I_\Sigma})$ coincide with $\CC(\bm{\mu})$.
Then, for every $h \in \CC(\bm{\mu})$, 
\[
  h \text{ is identifiable} \iff h \text{ is IO-identifiable}.
\]
\end{theorem}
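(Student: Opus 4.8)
The plan is to prove the nontrivial direction and derive the equivalence at the level of fields. Write $E$ for the field of identifiable functions and $k$ for the field of IO-identifiable functions (Definition~\ref{def:ioid}). By Theorem~\ref{prop:definclusion} we already have $E \subseteq k$, so the whole task reduces to showing $k \subseteq E$, i.e. that every IO-identifiable function is identifiable. Throughout, set $\F = \Frac(\CC(\bm{\mu})\{\mathbf{x},\mathbf{y},\mathbf{u}\}/I_\Sigma)$, which is a differential field whose constant field is exactly $\CC(\bm{\mu})$ by the no-first-integrals hypothesis. The first step is a purely linear-algebraic reformulation of $I_{\mathrm{IO}} := I_\Sigma \cap \CC(\bm{\mu})\{\mathbf{y},\mathbf{u}\}$: as a $\CC(\bm{\mu})$-vector space it equals the space of $\CC(\bm{\mu})$-linear dependencies among the images in $\F$ of the monomials of $\CC\{\mathbf{y},\mathbf{u}\}$. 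Indeed, $\sum_i a_i z_i$ (with $a_i \in \CC(\bm{\mu})$ and the $z_i$ distinct monomials in $\CC\{\mathbf{y},\mathbf{u}\}$) lies in $I_{\mathrm{IO}}$ iff $\sum_i a_i \bar z_i = 0$ in $\F$, and since $\CC(\bm{\mu})$ is precisely the constant field, this is exactly a linear dependence over the constants.

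Next I would invoke the standard fact that the kernel of a family of vectors is spanned by the dependencies supported on its circuits (minimal dependent subsets); as every element of $I_{\mathrm{IO}}$ has finite support, this applies here. Hence $I_{\mathrm{IO}}$ is $\CC(\bm{\mu})$-spanned by relations $g = \sum_{i=1}^N a_i z_i$ of minimal support, which I normalize so that $a_1 = 1$. For such a circuit, every proper subset of $\{\bar z_1,\ldots,\bar z_N\}$ is linearly independent over $\CC(\bm{\mu})$. Because $\CC(\bm{\mu})$ is the constant field of $\F$, the classical Wronskian criterion (linear independence over the constants is equivalent to non-vanishing of the Wronskian) shows $\det \Wr_{N-1}(Z) \neq 0$ in $\F$, i.e. $\det \Wr_{N-1}(Z) \notin I_\Sigma$, for every $(N-1)$-element subset $Z$. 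Lemma~\ref{lem:non0wron} then gives that all coefficients $a_i$ of $g$ are identifiable; the small circuits and the degenerate case $\bar z_i = \bar z_j$ (where $z_i - z_j$ already has coefficients in $\CC \subseteq E$) are handled directly.

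Finally I would assemble the pieces. Each normalized circuit relation now lies in $E\{\mathbf{y},\mathbf{u}\}$, and these relations $\CC(\bm{\mu})$-span $I_{\mathrm{IO}}$; therefore the ideal generated by $I_{\mathrm{IO}} \cap E\{\mathbf{y},\mathbf{u}\}$ already contains the $\CC(\bm{\mu})$-span $I_{\mathrm{IO}}$, so $I_{\mathrm{IO}}$ is generated by $I_{\mathrm{IO}} \cap E\{\mathbf{y},\mathbf{u}\}$. By minimality of the field of definition in Definition~\ref{def:ioid} this yields $k \subseteq E$, and combined with $E \subseteq k$ we conclude $E = k$, which is precisely the asserted equivalence.

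The hard part, and the only place the hypothesis enters, is the Wronskian step: it is the assumption that $\Sigma$ has no rational first integrals, i.e. $C(\F) = \CC(\bm{\mu})$, that converts ``$\CC(\bm{\mu})$-linear independence of a subfamily of a circuit'' into ``non-vanishing Wronskian modulo $I_\Sigma$,'' which is exactly the input Lemma~\ref{lem:non0wron} requires. Without it (as in Example~\ref{ex:IOidnotid}, where $x_3$ is a first integral) a circuit over $\CC(\bm{\mu})$ need not be independent over the larger constant field, the relevant Wronskians may vanish, and the argument breaks down, consistently with identifiability and IO-identifiability diverging there.
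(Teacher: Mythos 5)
Your proof is correct, and its overall strategy coincides with the paper's: one inclusion comes from Theorem~\ref{prop:definclusion}; then one produces a $\CC(\bm{\mu})$-spanning set of $J := I_\Sigma \cap \CC(\bm{\mu})\{\mathbf{y},\mathbf{u}\}$ consisting of relations all of whose proper monomial subfamilies are $\CC(\bm{\mu})$-independent modulo $I_\Sigma$; the no-first-integral hypothesis turns this into independence over the constants of $\F$, so Kaplansky's Wronskian criterion gives nonvanishing Wronskians, Lemma~\ref{lem:non0wron} makes all coefficients identifiable, and minimality in Definition~\ref{def:ioid} finishes. The genuine difference is how the spanning set is produced. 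The paper writes a $\CC(\bm{\mu})$-basis of $J$ as an infinite matrix in the monomial basis, passes to its reduced row echelon form, and proves the minimal-dependence property of each row by a reduction argument tailored to RREF; you instead invoke the matroid-theoretic fact that the space of finite-support linear dependencies is spanned by dependencies of minimal support (circuits), for which that property holds by definition. Your route is arguably cleaner and avoids infinite matrices, but be aware that the circuit-spanning fact is precisely where the paper's RREF argument does its work, so it should not stay a bare appeal to a ``standard fact'': a short induction on the size of the support (if a proper subfamily of a dependency is dependent, subtract a suitable scalar multiple of that smaller dependency to strictly shrink the support, and apply the inductive hypothesis to both pieces) makes your proof self-contained. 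What the paper's construction buys in exchange is a canonical spanning set in the spirit of the characteristic-presentation computations of Section~\ref{sec:charsetcomput}, but that is not needed for this theorem; your handling of the degenerate small circuits, and your final passage from ``coefficients identifiable'' to $k \subseteq E$ via the minimality in Definition~\ref{def:ioid}, are both sound.
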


\begin{proof}
  Proposition~\ref{prop:definclusion} implies that the field of all identifiable functions is contained in the field of all IO-identifiable functions.
  
  Let $J := I_\Sigma \cap \CC(\bm{\mu}) \{\mathbf{y}, \mathbf{u}\}$.
We fix an indexing of differential monomials in $\mathbf{y}$ and $\mathbf{u}$ by $\mathbb{N}$, it defines an $\mathbb{N}$-indexed basis $\mathcal{B}$ of $\CC(\bm{\mu}) \{\mathbf{y}, \mathbf{u}\}$.
  Consider an infinite matrix with each row being an element of a $\CC(\bm{\mu})$-basis of $J$ written as a
  vector in basis $\mathcal{B}$.
  Let $M$ be the  reduced row echelon form of the matrix.
  Notice that, since the original matrix has only finitely many nonzero entries in each row, $M$ also has only finitely many nonzero entries in each row.
  The field of definition of $J$ over $\mathbb{C}$ is contained in the field generated by the entries of $M$. 
  Therefore, it is sufficient to prove that the entries of $M$ are identifiable.
  Consider any row of $M$. It corresponds to a differential polynomial $p \in J$.
  Assume that a proper subset of monomials of $p$ is linearly dependent modulo $J$ over $\mathbb{C}(\bm{\mu})$.
  This dependence yields a polynomial $q \in J$.
  The representation of $q$ in basis $\mathcal{B}$ must be reducible to zero by the rows of $M$.
  However, the reduction of $q$ with respect to $p$ is not zero (as they are not proportional), and the result of this reduction is not reducible by any other row of $M$ by the definition of reduced row echelon form.
  Thus, there is no such $q$.
  Hence, the image of every proper subset of monomials of $p$ in $\Frac(\CC(\bm{\mu})\{\mathbf{x}, \mathbf{y}, \mathbf{u}\} / I_\Sigma)$ is linearly independent over the constants of $\Frac(\CC(\bm{\mu})\{\mathbf{x}, \mathbf{y}, \mathbf{u}\} / I_\Sigma)$.
  Thus, \cite[Theorem 3.7, p. 21]{Kap} implies that the Wronskian of every proper subset of monomials of $p$ does not belong to~$I_\Sigma$.
  Lemma~\ref{lem:non0wron} implies that the coefficients of $p$ are identifiable.
\end{proof}

\begin{example}
System $\Sigma$ from Example~\ref{ex:IOidnotid} has $x_3$ as a rational first integral, and, in this example, function $h=\omega$ is IO-identifiable and is not identifiable.
\end{example}


\section{IO-identifiability via characterstic sets}\label{sec:charsetcomput}

\subsection{Differential algebra preliminaries}\label{sec:diffalg}

We will use the following notation and definitions standard in differential algebra (see, e.g., \cite[Chapter~I]{Kol}, \cite[Chapter~I]{Ritt}, and \cite[Section~2]{Boulier2000}):

\begin{definition}\label{def:diffranking}
  A {\em differential ranking} on $K\{x_1,\ldots,x_n\}$ is a total order $>$ on $X := \{\delta^ix_j\mid i\geqslant 0,\, 1\leqslant j\leqslant n\}$ satisfying:
  \begin{itemize}[leftmargin=7.5mm]
    \item for all $x \in X$, $\delta(x) > x$ and
    \item for all $x, y \in X$, if $x >y$, then $\delta(x) > \delta(y)$.
  \end{itemize}
\end{definition}

It can be shown that a differential ranking on $K\{x_1,\ldots,x_n\}$ is always a well order.

\begin{notation}\hypertarget{leadinit}{}
  For  $f \in K\{x_1,\ldots,x_n\} \backslash K$ and differential ranking $>$,
  \begin{itemize}[leftmargin=7.5mm]
    \item $\lead(f)$ is the element of $\{\delta^ix_j \mid i \geqslant 0, 1 \leqslant j \leqslant n\}$ appearing in $f$ that is maximal with respect to $>$.
    \item The leading coefficient of $f$ considered as a polynomial in $\lead(f)$ is denoted by $\init(f)$ and called the initial of $f$. 
    \item The separant of $f$ is $\frac{\partial f}{\partial\lead(f)}$, the partial derivative of $f$ with respect to $\lead(f)$.
    \item The rank of $f$ is $\rank(f) = \lead(f)^{\deg_{\lead(f)}f}$.
    \item For $S \subset K\{x_1,\ldots,x_n\} \backslash K$, the set of initials and separants of $S$ is denoted by $H_S$.
    \item for $g \in K\{x_1,\ldots,x_n\} \backslash K$, say that $f < g$ if $\lead(f) < \lead(g)$ or $\lead(f) = \lead(g)$ and $\deg_{\lead(f)}f < \deg_{\lead(g)}g$.
  \end{itemize}
\end{notation}
\hypertarget{charset}{}
\begin{definition}[Characteristic sets]
  \begin{itemize}[leftmargin=7.5mm]
    \item For $f, g \in K\{x_1,\ldots,x_n\} \backslash K$, $f$ is said to be reduced w.r.t. $g$ if no proper derivative of $\lead(g)$ appears in $f$ and $\deg_{\lead(g)}f <\deg_{\lead(g)}g$.
    \item 
    A subset $\mathcal{A}\subset K\{x_1,\ldots,x_n\} \backslash K$
    is called {\em autoreduced} if, for all $p \in \mathcal{A}$, $p$ is reduced w.r.t. every  element of $\mathcal A\setminus \{p\}$. 
    One can show that every autoreduced set has at most $n$ elements (like a triangular set but unlike a Gr\"obner basis in a polynomial ring).
    \item Let $\mathcal{A} = \{A_1, \ldots, A_r\}$ and $\mathcal{B} = \{B_1, \ldots, B_s\}$ be autoreduced sets such that $A_1 < \ldots < A_r$ and $B_1 < \ldots < B_s$. 
    We say that $\mathcal{A} < \mathcal{B}$ if
    \begin{itemize}
      \item $r > s$ and $\rank(A_i)=\rank(B_i)$, $1\leqslant i\leqslant s$, or
      \item there exists $q$ such that $\rank(A_q) <\rank(B_q)$ and, for all $i$, $1\leqslant i< q$, $\rank(A_i)=\rank(B_i)$.
    \end{itemize}
    \item An autoreduced subset of the smallest rank of a differential ideal $I\subset K\{x_1,\ldots,x_n\}$
    is called a {\em characteristic set} of $I$. One can show that every non-zero differential ideal in $K\{x_1,\ldots,x_n\}$ has a characteristic set. Note that a characteristic set does not necessarily generate the ideal.
  \end{itemize}
\end{definition}
\hypertarget{charpres}{}
\begin{definition}[Characteristic presentation]\label{def:char_pres}
  \begin{itemize}[leftmargin=7.5mm]
    \item A polynomial is said to be {\em monic} if at least one of its coefficients is $1$. Note that this is how monic is typically used in identifiability analysis and not how it is used in~\cite{Boulier2000}. A set of polynomials is said to be monic if each polynomial in the set is monic.
    \item Let $\mathcal C$ be a characteristic set of a prime differential ideal $P \subset K\{z_1,\ldots,z_n\}$.
    Let $N(\mathcal{C})$ denote the set of non-leading variables of $\mathcal{C}$.
    Then $\mathcal{C}$ is called a {\em characteristic presentation} of $P$ if all initials of $\mathcal C$ belong to $K[N(\mathcal C)]$  and 
   none of the elements of $\mathcal{C}$ has a factor in $K[N(\mathcal C)] \backslash K$.  It follows from ~\cite{Boulier2000} that $P$ has a characteristic presentation.
  \end{itemize}
\end{definition}

\hypertarget{monomial}{}
\begin{definition}[Monomial]
    Let $K$ be a differential field and let $X$ be a set of variables.  An element of the differential polynomial ring $K\{X\}$ is said to be a \emph{monomial} if it belongs to the smallest multiplicatively closed set containing $1$, $X$, and the derivatives of $X$.  An element of the polynomial ring $K[X]$ is said to be a \emph{monomial} if it belongs to the smallest multiplicatively closed set containing $1$ and $X$.
\end{definition}


\subsection{IO-identifiable functions via characteristic presentations}
Corollary~\ref{cor:charsetIO} shows how the field of IO-identifiable functions can be computed via input-output equations.

\begin{proposition}\label{prop:charfod}
Let $L \subseteq K$ be differential fields and let $X$ be a finite set of variables.  
Let $P$ be a prime non-zero differential ideal of $K\{X\}$ such that the ideal generated by $P$ in $\overline{K}\{X\}$ is prime.  
If $\mathcal C$ is a \hyperlink{charpres}{monic characteristic presentation} of $P$, 
then the field of definition of $P$ over $L$ is the field extension of $L$ generated by the coefficients of $\mathcal C$.
\end{proposition}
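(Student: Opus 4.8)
The plan is to treat the characteristic presentation $\mathcal{C}$ as the differential analogue of a reduced Gr\"obner basis and to adapt the classical fact that the field of definition of an ideal is generated by the coefficients of such a basis. Write $k_1$ for the field extension of $L$ generated by the coefficients of $\mathcal{C}$ and $k_0$ for the field of definition of $P$ over $L$. I will establish $k_0 = k_1$ by proving the two inclusions separately.

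For $k_0 \subseteq k_1$, I would use the fact (from~\cite{Boulier2000}) that a characteristic presentation of a prime differential ideal generates it as a saturated differential ideal, so that $P = [\mathcal{C}] : H_{\mathcal{C}}^\infty$ in $K\{X\}$. Since $\mathcal{C}$, together with its initials and separants, has all coefficients in $k_1$, I can form the ideal $P_1 := [\mathcal{C}] : H_{\mathcal{C}}^\infty$ computed inside $k_1\{X\}$. Because $K$ is faithfully flat over $k_1$, both forming the differential ideal generated by $\mathcal{C}$ and saturating by $H_{\mathcal{C}}$ commute with the base change $-\otimes_{k_1}K$; hence $P_1 \cdot K\{X\} = [\mathcal{C}] : H_{\mathcal{C}}^\infty = P$. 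As $P_1 \subseteq k_1\{X\}$ and $P_1 \subseteq P_1 K\{X\} = P$, we get $P_1 \subseteq P \cap k_1\{X\}$, and therefore $P = P_1 K\{X\} \subseteq (P \cap k_1\{X\})K\{X\} \subseteq P$. Thus $P$ is generated by $P \cap k_1\{X\}$, so $k_1$ is one of the fields over which $P$ is defined and $k_0 \subseteq k_1$.

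For $k_1 \subseteq k_0$, I would pass to $\overline{K}$ and run a Galois-type argument, which is where the absolute-primality hypothesis enters. First I would check that $\mathcal{C}$ is a characteristic presentation of $\overline{P} := P \cdot \overline{K}\{X\}$: flat base change gives $\overline{P} = [\mathcal{C}] : H_{\mathcal{C}}^\infty$, so every element of $\overline{P}$ pseudo-reduces to zero modulo $\mathcal{C}$ and $\mathcal{C}$ is a characteristic set of $\overline{P}$; monicity and the containment of the initials in $\overline{K}[N(\mathcal{C})]$ are immediate, and the condition that no element of $\mathcal{C}$ has a factor in $\overline{K}[N(\mathcal{C})]\setminus\overline{K}$ survives the extension, because it says that the content of each element is constant and contents of polynomials are stable under field extension. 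Since $\overline{P}$ is prime by hypothesis, its characteristic presentation is unique for the given ranking by~\cite{Boulier2000}. Because $k_0$ is the field of definition, $P = (P \cap k_0\{X\})K\{X\}$, and hence $\overline{P} = (P \cap k_0\{X\})\overline{K}\{X\}$, so $\overline{P}$ is generated by differential polynomials whose coefficients lie in $k_0$. Now take any $\sigma \in \Aut(\overline{K}/k_0)$ and extend it to $\overline{K}\{X\}$ by letting it fix every variable $\delta^i x_j$; then $\sigma$ fixes those generators, so $\sigma(\overline{P}) = \overline{P}$, while fixing the variables means $\sigma$ preserves autoreducedness, ranks, and all the normalization conditions. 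Consequently $\sigma(\mathcal{C})$ is again a characteristic presentation of $\overline{P}$, so $\sigma(\mathcal{C}) = \mathcal{C}$ by uniqueness; since the elements of $\mathcal{C}$ have distinct leaders that $\sigma$ fixes, this forces $\sigma(C_i) = C_i$ for each $i$ and hence $\sigma$ fixes every coefficient of $\mathcal{C}$. As this holds for all $\sigma \in \Aut(\overline{K}/k_0)$, each coefficient lies in the fixed field of $\Aut(\overline{K}/k_0)$, which in characteristic zero is $k_0$; therefore $k_1 \subseteq k_0$.

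The step I expect to be the main obstacle is showing that $\mathcal{C}$ remains a characteristic presentation after the extension to $\overline{K}$ and, in particular, that its uniqueness is available there. The characteristic-set property transfers cleanly through the saturation identity $\overline{P} = [\mathcal{C}] : H_{\mathcal{C}}^\infty$, and the normalization conditions transfer because contents are stable under field extension; the essential input is that $\overline{P}$ is prime, which is exactly the hypothesis that $P$ generates a prime ideal in $\overline{K}\{X\}$ and which guarantees the uniqueness of the characteristic presentation that pins $\mathcal{C}$ down under the automorphisms $\sigma$. Without absolute primality this uniqueness, and hence the whole Galois-type argument, could break down. A minor additional point is the fixed-field computation for $\Aut(\overline{K}/k_0)$, which yields $k_0$ in the characteristic-zero setting of interest.
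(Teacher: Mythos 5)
Your proof is correct in the regime where the paper's own proof operates (see the caveats below), but only half of it follows the paper's route. The inclusion $k_1 \subseteq k_0$ (coefficients lie in the field of definition) is essentially the paper's argument: transfer $\mathcal{C}$ to a monic characteristic presentation of the extended ideal $P_1 = P\,\overline{K}\{X\}$, invoke the uniqueness theorem for monic characteristic presentations of prime differential ideals \cite[Theorem 3, p.~42]{Boulier2000} --- this is exactly where absolute primality enters, as you correctly identify --- and conclude by Galois descent; the paper phrases this as a contradiction with a single automorphism moving a coefficient, while you phrase it via the fixed field of $\Aut(\overline{K}/k_0)$, but it is the same argument. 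The other inclusion, $k_0 \subseteq k_1$, is where you genuinely diverge. The paper never base-changes over the subfield $k_1 = L(A)$: instead it constructs a monic generating set $\{h_i\}$ of $P_1$ whose supports are minimal with respect to inclusion (via a recursive support-shrinking procedure), and runs a second Galois argument, using $P_1 = [\mathcal{C}]\colon H_{\mathcal{C}}^\infty$ to show that a nonzero difference $h_i - \alpha(h_i)$ would lie in $P_1$ with strictly smaller support. Your replacement --- the identity $P = [\mathcal{C}]\colon H_{\mathcal{C}}^\infty$ over $K$ combined with the fact that differential-ideal generation and saturation at a single element commute with the faithfully flat extension $k_1\{X\} \hookrightarrow K\{X\}$ --- is shorter, and it isolates the descent as pure commutative algebra rather than a combinatorial construction.

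Two caveats. The serious one: for ``$[\mathcal{C}]\colon H_{\mathcal{C}}^\infty$ computed inside $k_1\{X\}$'' to make sense, $k_1 = L(A)$ must be closed under the derivation of $K$, since the derivatives $\delta^j c$, $c \in \mathcal{C}$, have coefficients involving derivatives of elements of $A$. This is automatic in the paper's application, where $K = \CC(\bm{\mu})$ is a field of constants, but for nonconstant $K$ it can fail, and then the proposition itself fails: over $K = \Q(t)$ with $\delta = d/dt$ and $L = \Q$, the prime ideal $P = [x' - t^2 x]$ has monic characteristic presentation $\{x' - t^2x\}$ with coefficient field $\Q(t^2)$, yet its field of definition over $\Q$ is $\Q(t)$, because the echelon basis of the space of linear elements of $P$ contains $x'' - (2t + t^4)x$. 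The paper's proof tacitly makes the same assumption (it extends automorphisms of $\overline{K}$ to \emph{differential} ring automorphisms fixing $X$, and its step ``$\alpha$ fixes the coefficients of $\mathcal{C}$, hence $\alpha(h_i) \in [\mathcal{C}]\colon H_{\mathcal{C}}^\infty$'' needs $\alpha$ to fix the coefficients of all \emph{derivatives} of elements of $\mathcal{C}$), so this does not put you below the paper's standard, but the hypothesis should be stated. The minor one: your inference ``$\overline{P} = [\mathcal{C}]\colon H_{\mathcal{C}}^\infty$, hence every element of $\overline{P}$ pseudo-reduces to zero, hence $\mathcal{C}$ is a characteristic set of $\overline{P}$'' is circular as written --- pseudo-reducing $f \in \overline{P}$ produces a reduced element of $\overline{P}$, and concluding that it vanishes is precisely the characteristic-set property you are trying to establish. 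The clean repair (which also fills the same unproved assertion in the paper, namely ``$\mathcal{C}$ is a characteristic set for $P_1$''): a nonzero element of $\overline{P} = P \otimes_K \overline{K}$ reduced with respect to $\mathcal{C}$ decomposes, along a $K$-basis of $\overline{K}$, into nonzero elements of $P$ reduced with respect to $\mathcal{C}$, contradicting that $\mathcal{C}$ is a characteristic set of $P$.
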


\begin{proof}
Let $A$ be the set of coefficients of $\mathcal C$ and let $k$ be the field of definition of $P$ over $L$.

\underline{Suppose $A \not\subset k$}.  
Let $P_1$ be the ideal generated by the image of $P$ in $\overline{K}\{X\}$.  
We show that $\mathcal C$ is a monic characteristic presentation for $P_1$.  We have that $\mathcal C$ is a \hyperlink{charset}{characteristic set} for $P_1$.  
Since the initials of $\mathcal{C}$ lie in $K[N(\mathcal{C})]$, they also lie in $\overline{K}[N(\mathcal{C})]$.
The property of not having a factor in the nonleading variables does not depend on the coefficient field as well.
By \cite[Definition 2.6]{Hubert00} and the paragraph thereafter, we have that $P=[\mathcal{C}]:H_\mathcal{C}^\infty$ in $K\{X\}$, and therefore $[\mathcal{C}]:H_\mathcal{C}^\infty \subset P_1$, where the differential ideal operation is taken over $\overline{K}\{X\}$.
Since $\mathcal{C}$ is a characteristic set of $P_1$, the paragraph following \cite[Definition 2.4]{Hubert00} implies that $P_1$ is contained in $[\mathcal{C}] \colon H_{\mathcal{C}}^\infty$, so $P_1 = [\mathcal{C}] \colon H_{\mathcal{C}}^\infty$.
Hence,
\cite[Corollary 1, p. 42]{Boulier2000}, we conclude that $
\mathcal C$ is a monic characteristic presentation for $P_1$.

By \cite[Theorem 9.29, p. 117]{milneFT}, there is an automorphism $\alpha$ of $\overline{K}$ that fixes $k$ but moves some element of $A$.  
Extend $\alpha$ to a differential ring automorphism on $\overline{K}\{X\}$ that fixes $X$.  
We show that $\alpha(\mathcal C)$ is a monic characteristic presentation of $P_1$.
Since the initials of $\mathcal C$ lie in $K[N(\mathcal{C})]$ and no element of $\mathcal C$ has a factor in $K[N(\mathcal{C})] \backslash K$, it follows that the initials of $\alpha(\mathcal C)$ lie in $\overline{K}[N(\alpha(\mathcal{C}))]$ and no element of $\alpha(\mathcal C)$ has a factor in $\overline{K}[N(\alpha(\mathcal{C}))]\backslash\overline{K}$.
Since the rank of $\alpha(\mathcal C )$ is the same as that of $\mathcal C$, it remains to show that $\alpha(\mathcal C ) \subset P_1$.
Let $f \in \mathcal C$.  
Since $P$ is defined over $k$, it follows that $P_1$ is defined over $k$.  
Therefore, there exist $a_i \in k\{X\} \cap P_1$  and $b_i \in \overline{K}\{X\}$ such that $f = \sum_i a_ib_i$.  
Thus,
\[
\alpha(f) = \sum_i a_i \alpha(b_i) \in P_1.
\]
We conclude that $\alpha(\mathcal C) \subset P_1$ and thus is a characteristic set of $P_1$.

We have shown that $\mathcal C$ and $\alpha(\mathcal C)$ are monic characteristic presentations of $P_1$.  By \cite[Theorem 3, p. 42]{Boulier2000}, $\alpha(\mathcal C) = \mathcal C$.  However, since $\alpha$ moves some coefficient appearing in $\mathcal C$, we have a contradiction.  We conclude that our assumption that $A \not\subset k$ is false.

\underline{It remains to show that $k \subseteq L(A)$}.
Let $\{h_i\}_{i\in B}$ be a monic generating set of $P_1$ as an ideal such that, for all $i\in B$ and for all $g \in P_1 \backslash \{h_i\}$, the support of $h_i-g$ is not a proper subset of the support of $h_i$.  
We argue that such a generating set exists.
We describe a map $\phi \colon P_1 \rightarrow \mathcal{P}(P_1)$, where $\mathcal{P}(P_1)$ denotes the power set of $P_1$, such that $\forall b \in P_1$
\begin{itemize}[leftmargin=7.5mm]
\item $b$ belongs to the ideal generated by $\phi(b)$ and
\item $\forall a \in \phi(b) \; \forall d \in P_1 \backslash \{0\}$ the support of $d$ is not a proper subset of the support of $a$.
\end{itemize}
Let $b \in P_1$. Construct $\phi(b)$ recursively as follows.  If there is no element of $P_1 \backslash \{0\}$ whose support is a proper subset of the support of $b$, let $\phi(b) = \{b\}$.  If there is an $a \in P_1 \backslash \{0\}$ whose support is a proper subset of the support of $b$, let $\phi(b) = \phi(a) \cup \phi(b - ca)$, where $c \in \CC$ is such that $b - ca$ has smaller support than $b$.  This completes the construction of $\phi$.  Note that the procedure terminates since for each non-terminal step, the support of each element of the output is smaller than the support of the input.  Let $\{b_i\}_{i\in B_0}$ be a generating set for $P_1$ as an ideal.  Now $\bigcup_{i \in B_0}\phi(b_i)$, after normalization so that each element is monic, has the desired properties.

Fix $i$ and suppose that some coefficient of $h_i$ does not belong to $L(A)$.  
Then by \cite[Theorem 9.29, p. 117]{milneFT}, there is an automorphism $\alpha$ of $\overline{K}$ such that $\alpha$ fixes $L(A)$ and $\alpha(h_i) \neq h_i$.  
Since $h_i$ is monic, we have that $h_i - \alpha(h_i)$ has smaller support than $h_i$.  
Now we show that $h_i - \alpha(h_i) \in P_1$.  
Since $h_i \in P_1$, we have that $h_i \in \hyperlink{DiffIdeal}{[}\mathcal C\hyperlink{DiffIdeal}{]}\hyperlink{colon}{:}{\hyperlink{leadinit}{H_{\mathcal C}}}^{\hyperlink{infinity}{\infty}}$.  Therefore, since $\alpha$ fixes the coefficients of $\mathcal C$, we have \[\alpha(h_i) \in [\mathcal C]:H_{\mathcal C}^\infty.\]  
Hence, \[h_i-\alpha(h_i) \in [\mathcal C]:H_{\mathcal C}^\infty = P_1.\]  
This contradicts the definition of $\{h_i\}_{i\in B}$. 
Since the coefficients of $h_i$ belong to $L(A)$, $\{h_i\}_{i\in B}$ is also a generating set for $P$.  
Therefore, $P$ is generated by $P \cap L(A)\{X\}$. 
By the definition of $k$, it follows that $k \subseteq L(A)$.
\end{proof}

 The following result reduces the problem of finding the field of IO-identifiable functions of~\eqref{eq:sigma} to the problem of finding a monic characteristic presentation of the corresponding prime differential ideal. The latter problem has been solved (see e.g.,~\cite{Boulier2000}), and there is an implementation of the corresponding algorithm, Rosenfeld-Gr\"obner, in {\sc Maple}.

\begin{corollary}\label{cor:charsetIO}
If $\mathcal C$ is a \hyperlink{charpres}{monic characteristic presentation} of $\hyperlink{ISigma}{I_\Sigma} \cap \CC(\bm{\mu})\{\mathbf{y},\mathbf{u}\}$, then the field of IO-identifiable functions (as in Definition~\ref{def:ioid}) is generated over $\CC$ by the coefficients of the elements of $\mathcal C$.
\end{corollary}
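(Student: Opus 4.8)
The plan is to deduce Corollary~\ref{cor:charsetIO} from Proposition~\ref{prop:charfod} by verifying that the latter's hypotheses are met in the specific setting $P = I_\Sigma \cap \CC(\bm{\mu})\{\mathbf{y}, \mathbf{u}\}$, with the tower of fields taken to be $L = \CC \subseteq K = \CC(\bm{\mu})$ and $X = \{\mathbf{y}, \mathbf{u}\}$. Once those hypotheses are checked, Proposition~\ref{prop:charfod} tells us that the field of definition of $P$ over $\CC$ equals the field generated over $\CC$ by the coefficients of $\mathcal{C}$. It then remains only to observe that this field of definition is, by Definition~\ref{def:ioid}, exactly the field of IO-identifiable functions, since Definition~\ref{def:ioid} asks for the smallest $k$ with $\CC \subseteq k \subseteq \CC(\bm{\mu})$ such that $I_\Sigma \cap \CC(\bm{\mu})\{\mathbf{y}, \mathbf{u}\}$ is generated by $I_\Sigma \cap k\{\mathbf{y}, \mathbf{u}\}$.

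\textbf{The three hypotheses to verify} are: (i) $P$ is a prime non-zero differential ideal of $\CC(\bm{\mu})\{\mathbf{y}, \mathbf{u}\}$; (ii) the ideal generated by $P$ in $\overline{\CC(\bm{\mu})}\{\mathbf{y}, \mathbf{u}\}$ is prime; and (iii) the set $\mathcal{C}$ given in the statement is a monic characteristic presentation of $P$, which is exactly what is assumed. For (i), primeness of $P$ follows from the fact that $I_\Sigma$ is a prime differential ideal (being $[\,Q\mathbf{x}' - \mathbf{f}, Q\mathbf{y} - \mathbf{g}\,]\colon Q^\infty$, the defining ideal of the model, which is prime because the model equations describe an irreducible variety / the quotient embeds into a field of fractions), and the intersection of a prime ideal with a subring is prime. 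I would note that $P$ is non-zero in the nontrivial cases; if $P = \{0\}$ the statement degenerates and the field of IO-identifiable functions is just $\CC$. For (ii), I would argue that extending the base field from $\CC(\bm{\mu})$ to its algebraic closure preserves primeness of the extension of $I_\Sigma$ because the model's solution gives a $\CC(\bm{\mu})$-point that is generic, so the corresponding variety stays geometrically irreducible; concretely, the quotient $\Frac(\CC(\bm{\mu})\{\mathbf{x}, \mathbf{y}, \mathbf{u}\}/I_\Sigma)$ is a field, and $\overline{\CC(\bm{\mu})}$ is algebraically closed in a suitable sense relative to the transcendental generators $\mathbf{y}, \mathbf{u}$, so no new zero divisors are introduced upon base change. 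This geometric-irreducibility-under-base-change claim is the delicate point.

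\textbf{The main obstacle} I expect is precisely hypothesis (ii): establishing that $P$ remains prime after extending scalars to $\overline{\CC(\bm{\mu})}$. This is not automatic, since base change of a prime ideal to an algebraically closed field can fail to be prime when the original variety is not geometrically irreducible. The cleanest route is to show that $\overline{\CC(\bm{\mu})}$ is linearly disjoint from the residue field $\Frac(\CC(\bm{\mu})\{\mathbf{y}, \mathbf{u}\}/P)$ over $\CC(\bm{\mu})$, equivalently that the latter is a regular (or at least separable and primary) extension of $\CC(\bm{\mu})$; since we are in characteristic zero, separability is free, so it reduces to $\CC(\bm{\mu})$ being algebraically closed inside that residue field. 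This last fact should follow from the structure of $I_\Sigma$: the generators $\mathbf{y}, \mathbf{u}$ and their derivatives are differentially transcendental in the appropriate sense over $\CC(\bm{\mu})$, so no element of $\overline{\CC(\bm{\mu})} \setminus \CC(\bm{\mu})$ lies in the residue field. I would make this precise and then invoke Proposition~\ref{prop:charfod} to conclude.
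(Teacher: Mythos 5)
Your overall skeleton is the same as the paper's: apply Proposition~\ref{prop:charfod} with $L = \CC$, $K = \CC(\bm{\mu})$, $P = I_\Sigma \cap \CC(\bm{\mu})\{\mathbf{y},\mathbf{u}\}$, and then observe that the field of definition of $P$ over $\CC$ is, by Definition~\ref{def:ioid}, exactly the field of IO-identifiable functions (since $I_\Sigma \cap k\{\mathbf{y},\mathbf{u}\} = P \cap k\{\mathbf{y},\mathbf{u}\}$ for every intermediate field $k$). You also correctly isolate the delicate hypothesis: primeness of the ideal generated by $P$ in $\overline{\CC(\bm{\mu})}\{\mathbf{y},\mathbf{u}\}$. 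The genuine gap is that your justification of this point rests on a false premise. You claim that ``the generators $\mathbf{y}, \mathbf{u}$ and their derivatives are differentially transcendental in the appropriate sense over $\CC(\bm{\mu})$''; but $\mathbf{y}$ is differentially \emph{algebraic} over the differential field generated by $\CC(\bm{\mu})$ and $\mathbf{u}$ --- the nonzero elements of $P$, i.e., the IO-equations, are precisely such relations, and $P \neq 0$ whenever $m \geqslant 1$ (so your worry about the degenerate case $P=0$ is moot, by the same transcendence-degree count). Consequently the residue field $\Frac(\CC(\bm{\mu})\{\mathbf{y},\mathbf{u}\}/P)$ is \emph{not} generated by algebraically independent elements, and no transcendence statement about $\mathbf{y},\mathbf{u}$ themselves can yield that $\CC(\bm{\mu})$ is relatively algebraically closed in it. Your justification of hypothesis (i) (``prime because the model equations describe an irreducible variety'') is likewise circular rather than an argument.

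The missing ingredient --- which the paper supplies by citing the proof of \cite[Lemma~3.2]{Hong} and noting that it is independent of the coefficient field --- is structural: modulo $I_\Sigma$, every $y_j^{(k)}$ and every $x_i^{(k)}$ with $k \geqslant 1$ rewrites as a polynomial in $\mathbf{x}$, $\mathbf{u}, \mathbf{u}', \mathbf{u}'', \ldots$ with $Q$ inverted, so that $\CC(\bm{\mu})\{\mathbf{x},\mathbf{y},\mathbf{u}\}/I_\Sigma$ is isomorphic to a localization of a polynomial ring, $\CC(\bm{\mu})[\mathbf{x}][\mathbf{u},\mathbf{u}',\ldots][1/Q]$. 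Run over $\overline{\CC(\bm{\mu})}$, the same computation shows the extension $I^e$ of $I_\Sigma$ to $\overline{\CC(\bm{\mu})}\{\mathbf{x},\mathbf{y},\mathbf{u}\}$ is prime; a basis argument (write elements of $\overline{\CC(\bm{\mu})}\{\mathbf{y},\mathbf{u}\}$ componentwise with respect to a $\CC(\bm{\mu})$-basis of $\overline{\CC(\bm{\mu})}$) then gives that the ideal generated by $P$ equals $I^e \cap \overline{\CC(\bm{\mu})}\{\mathbf{y},\mathbf{u}\}$, hence is prime --- this is the paper's route. Alternatively, the same isomorphism rescues your regularity route: it shows $\Frac(\CC(\bm{\mu})\{\mathbf{x},\mathbf{y},\mathbf{u}\}/I_\Sigma)$ is the rational function field $\CC(\bm{\mu})(\mathbf{x},\mathbf{u},\mathbf{u}',\ldots)$, in which $\CC(\bm{\mu})$ is relatively algebraically closed; since $\Frac(\CC(\bm{\mu})\{\mathbf{y},\mathbf{u}\}/P)$ embeds into that field, it is a regular extension of $\CC(\bm{\mu})$ in characteristic zero, and base change to $\overline{\CC(\bm{\mu})}$ then introduces no zero divisors. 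Either way, without this structural fact (or an explicit citation replacing it), your verification of hypothesis (ii) does not go through, so the proposal as written is incomplete.
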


\begin{proof}
The proof of \cite[Lemma~3.2]{Hong} shows that both $I_\Sigma$ and the ideal generated by the image of $I_\Sigma$ in $\overline{\CC(\bm{\mu})}\{\mathbf{x},\mathbf{y},\mathbf{u}\}$ are prime, since the argument does not depend on the coefficient field.  Therefore $I_\Sigma \cap \CC(\bm{\mu})\{\mathbf{y},\mathbf{u}\}$ and the ideal generated by $I_\Sigma \cap \CC(\bm{\mu})\{\mathbf{y},\mathbf{u}\}$ in $\overline{\CC(\bm{\mu})}\{\mathbf{y},\mathbf{u}\}$ are prime.  By Proposition~\ref{prop:charfod} with $L = \CC$, $K = \CC(\bm{\mu})$, and $P = I_\Sigma \cap \CC(\bm{\mu})\{\mathbf{y},\mathbf{u}\}$, we have that the field of definition of $P$ over $\CC$ is equal to the field extension of $\CC$ generated by the coefficients of $\mathcal C$.  This is exactly the field of IO-identifiable functions.
\end{proof}

\begin{example}\label{ex:5} Consider the following ODE model
\[
\Sigma = \begin{cases}
x_1'  = 0\\
x_2' = x_1x_2 + \mu_1x_1 + \mu_2\\
y = x_2
\end{cases}
\]
As shown in \cite[Lemma~5.1]{second_paper}, neither $\mu_1$ nor $\mu_2$ are identifiable (which can be seen by observing that adding $1$ to $\mu_1$ and subtracting $x_1$ from $\mu_2$ at the same time changes the parameters but does not change the output) and, moreover, the field of identifiable functions is just $\mathbb{C}$.

On the other hand, let us use Corollary~\ref{cor:charsetIO} to compute the field of IO-identifiable functions. We enter these equations in {\sc Maple} and set the elimination differential ranking on the differential variables with $x_1 > x_2 > y$. Within a second, we receive the following characteristic presentation of $I_\Sigma$:
\[
C = \{yx_1 + \mu_1x_1  - y' + \mu_2,\ \ x_2 - y,\ \ yy'' + \mu_1y''  - y'^2 + \mu_2y'\}.
\]
Hence, 
\[\mathcal{C} =C \cap \mathbb{C}(\mu_1,\mu_2)\{y\}= \{yy'' + \mu_1y''  - y'^2 + \mu_2y'\}.\]
 By Corollary~\ref{cor:charsetIO}, the field of IO-identifiable functions is $\mathbb{C}(\mu_1,\mu_2)$, which is also not equal to the field of identifiable functions.
\end{example}

\section*{Acknowledgments}We are grateful to the CCiS at CUNY Queens College for the computational resources and to Julio Banga, Marisa Eisenberg, Nikki Meshkat, Maria Pia Saccomani, Anne Shiu, Seth Sullivant, and Alejandro Villaverde for useful discussions.
We also thank the referees for their comments, which helped us  improve the manuscript.
This work was partially supported by the NSF grants CCF-1563942, CCF-1564132, CCF-1319632, DMS-1760448, CCF-1708884, DMS-1853650, DMS-1853482; NSA grant \#H98230-18-1-0016; and CUNY grants PSC-CUNY \#69827-0047, \#60098-00 48.
\bibliographystyle{abbrvnat}
\setlength{\bibsep}{5.5pt}
\bibliography{bibdata}
\end{document}